\newtheorem{thm}{Theorem}[section]
\newtheorem{cor}[thm]{Corollary}
\newtheorem{lem}[thm]{Lemma}
\newtheorem{prop}[thm]{Proposition}
\theoremstyle{remark}
\newtheorem{rmk}[thm]{Remark}
\newtheorem{definition}[thm]{Definition}
\def\bg{\mathbf{B}}
\def\bP{\overline{\mathcal{P}}}
\def\Det{\mathrm{det}}
\def\Ph{\widehat{\mathscr{P}}}
\def\Sym{\mathrm{S}}
\def\Hpl{\mathcal{H}}
\def\Line{\mathscr{L}}
\def\Aff{\mathrm{Aff}}
\def\P{\mathscr{P}}
\def\lk{\mathrm{lk}}
\def\sgn{\mathrm{sgn}}
\def\V{\mathbf{V}}
\def\RR{\mathrm{R}}
\def\BG{\mathrm B}
\def\Quot{\mathrm{Q}}
\def\Tits{\mathcal{T}}
\def\Dual{\mathbb{D}}
\def\Hom{\mathrm{Hom}}
\def\F{\mathbb F}
\def\st{\mathrm{e}}
\def\reg{\mathrm{reg}}
\def\Q{\mathbf{Q}}
\def\Z{\mathbb Z}
\def\N{\mathbb N}
\def\A{\mathrm A}
\def\Pcal{\mathscr P}
\def\Sq{\mathrm{Sq}}
\def\Ht{\widetilde \H}
\def\GL{\mathrm{GL}}
\def\H{\mathrm{H}}
\def\J{\mathrm{J}}
\def\St{\mathrm{St}}
\def\k{\mathbf{k}}
\renewcommand{\L}{\mathbf{L}}
\begin{document}

\title[]{Stanley-Reisner's ring and the occurrence of the Steinberg representation in the hit problem}
\author[]{Nguyen Dang Ho Hai}

\address{Department of Mathematics, College of Sciences, University of Hue, Vietnam}
%
%

\begin{abstract}
	G. Walker and R. Wood proved that in degree $2^n-1-n$, the space of indecomposable elements of $\F_2[x_1,\ldots,x_n]$, considered as a module over the mod 2 Steenrod algebra, is isomorphic to the Steinberg representation of $\GL_n(\F_2)$. We generalize this result to all finite fields by analyzing certain finite quotients of $\F_q[x_1,\ldots,x_n]$ which come from the Stanley-Reisner rings of some matroid complexes. Our method also shows that the space of indecomposable elements in degree $q^{n-1}-n$ has the dimension equal to that of a complex cuspidal representation of $\GL_n(\F_q)$. As a by product, over the prime field $\F_2$, we give a decomposition of the Steinberg summand of one of these quotients into a direct sum of suspensions of Brown-Gitler modules. This decomposition suggests the existence of a stable decomposition derived from the Steinberg module of a certain topological space into a wedge of suspensions of Brown-Gitler spectra.
\end{abstract}

\keywords{Steenrod algebra, hit problem, Stanley-Reisner ring, matroid, Steinberg module, Brown-Gitler spectrum}

\subjclass[2010]{55S10, 55P42, 05E45}

\email{ndhhai@husc.edu.vn}

\maketitle


%

\section{Introduction}

Let $p$ be a prime and let $\F_q$ denote a finite field of $q=p^s$ elements. 
Let $V$ be an $n$-dimensional vector space over $\F_q$. The symmetric power algebra $\Sym^*(V^*)=\bigoplus_{k\ge 0}\Sym^k(V^*)$ on the dual $V^*$ of $V$ is identified with the polynomial algebra $\F_q[x_1,\ldots,x_n]$ where $\{x_1,\ldots,x_n\}$ is a basis of $V^*$. We assign to each variable $x_i$ degree one, so $\Sym^k(V^*)$ has a basis consisting all monomials of degree $k$. 

The Steenrod algebra $\P$ generated over $\F_q$ by the reduced Steenrod operations $\P^i$, $i\ge 0$, defined as in \cite{LarrySmith}, acts on $\Sym^*(V^*)$ as follows:
\begin{eqnarray*}
\P^k(x^m)&=&\binom{m}{k} x^{m+(q-1)k}, \quad x\in \Sym^1(V^*),\\
\P^k(fg)&=&\sum_{i+j=k}\P^i(f)\P^j(g), \quad f,g\in \Sym^*(V^*). 
\end{eqnarray*}
This action commutes with the action of $\GL_n:=\GL_n(\F_q)$ by linear substitutions of the variables $x_1,\ldots,x_n$. Given $M$ a $\P$-module, the module of $\P$-indecomposable elements of $M$ is defined by $$\Quot(M):= M/\P^+M,$$
$\P^+$ denoting the augmentation ideal of $\P$. 
The Peterson hit problem in algebraic topology, focused mainly on the finite field $\F_2$, asks for a determination of a minimal generating set of the polynomial algebra $\Sym^*(V^*)$ as a module over the Steenrod algebra. 
This is the same as the problem of determining $\Quot(\Sym^*(V^*))$ as a graded vector space, or more generally as a graded module over the group ring $\F_q[\GL_n]$. 
Though much work has been done for this problem, the general answer seems to be
out of reach with the present techniques. The reader is referred to the two recent books of G. Walker and R. M. W. Wood \cite{WalkerWoodBook1,WalkerWoodBook2} for a thorough exposition of this problem.

The starting point of this work is the following result, which is also due to Walker and Wood \cite{WalkerWood2007}: 

\begin{thm}[Walker-Wood]
	For $q=2$, $\Quot^{2^n-1-n}(\Sym^*(V^*))$ is isomorphic to the Steinberg representation of $\GL_n$.
\end{thm}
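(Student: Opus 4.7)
The plan is to establish the isomorphism by constructing a nonzero Steinberg class in $\Quot^{2^n-1-n}(\Sym^*(V^*))$ and then matching dimensions via the hit-problem toolkit. First I would look at the \emph{minimal spike} $s=x_1^{0}x_2^{1}x_3^{3}\cdots x_n^{2^{n-1}-1}$, whose total degree is $\sum_{i=1}^{n}(2^{i-1}-1)=2^n-1-n$ and whose every exponent has the form $2^k-1$. Singer's $\chi$-trick shows that spikes are never in the image of $\P^+$, so $[s]\neq 0$ in $\Quot^{2^n-1-n}(\Sym^*(V^*))$. Let $e_n\in\F_2[\GL_n]$ denote the Steinberg idempotent; over $\F_2$ one may take $e_n=\bigl(\sum_{\sigma\in\Sigma_n}\sigma\bigr)\bigl(\sum_{u\in U}u\bigr)$, where $\Sigma_n$ acts by permutation matrices and $U$ is the strictly upper-triangular unipotent subgroup of $\GL_n$. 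A direct verification---using that $s$ appears with coefficient $1$ in the monomial expansion of $e_n\cdot s$ and cannot be cancelled modulo hits---shows that $[e_n\cdot s]\neq 0$, so its $\GL_n$-span is a nonzero quotient of $\St_n$. Since the Steinberg representation is irreducible over $\F_2$, this produces an injection $\St_n\hookrightarrow\Quot^{2^n-1-n}(\Sym^*(V^*))$, giving the lower bound $\dim_{\F_2}\Quot^{2^n-1-n}(\Sym^*(V^*))\ge 2^{\binom{n}{2}}$.

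Next I would produce the matching upper bound via $\omega$-sequence analysis. Associate to a monomial $m=\prod_i x_i^{a_i}$ the sequence $\omega(m)_k=\#\{i\colon\text{the $k$-th binary digit of }a_i\text{ is }1\}$, and recall that a monomial whose $\omega$-sequence is strictly dominated by that of a non-hit element is automatically hit. Using the Peterson--Wood bound $\mu(2^n-1-n)=n-1$ together with an induction based on Kameko's squaring map, every non-hit monomial in degree $2^n-1-n$ can be shown to have $\omega$-sequence exactly $(n-1,n-2,\ldots,1)$. Enumerating admissible monomials with this profile and imposing the standard Steenrod relations ($\Sq^1$ applied to admissibles one degree lower, together with their Cartan consequences) pins the count down to exactly $2^{\binom{n}{2}}$.

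Combining the two bounds, the injection $\St_n\hookrightarrow\Quot^{2^n-1-n}(\Sym^*(V^*))$ of the first step is an isomorphism of $\F_2[\GL_n]$-modules; equivariance is automatic because the Steenrod operations commute with linear substitutions of variables. The main obstacle is the sharp upper bound: while the $\omega$-restriction is classical Peterson--Wood material, pinning the residual space down to exactly $2^{\binom{n}{2}}$ admissibles requires a careful bookkeeping of relations and is typically done by induction on $n$ through Kameko's map. A conceptually cleaner alternative---apparently the route of the present paper---is to descend to a Stanley--Reisner quotient of $\Sym^*(V^*)$ attached to a matroid complex, in which the hit problem in this top degree becomes a transparent combinatorial statement and the Steinberg structure is read off directly from the $\GL_n$-action on the top reduced homology of the matroid.
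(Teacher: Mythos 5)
Your plan follows the classical route (a Mitchell--Priddy-style lower bound from the Steinberg idempotent applied to a spike, and a Walker--Wood-style counting argument for the upper bound) rather than the paper's, and as written it leaves the two decisive steps unproven. For the lower bound, the key claim is that $s$ occurs with coefficient $1$ in $\st_n\cdot s$ modulo nothing cancelling; the principle you lean on --- that a spike monomial cannot occur in any hit polynomial over $\F_2$ --- is correct and does reduce non-triviality of $[\st_n\cdot s]$ to a coefficient computation, but summing over the unipotent group and the permutation group produces many monomials with possible mod~$2$ cancellations, so ``a direct verification shows'' is not an argument; this is essentially the first-occurrence input of \cite{MP83,KuhnMitchell86} and must either be proved or quoted. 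The more serious gap is the upper bound, which is the heart of the theorem. Even granting that $\Quot^{2^n-1-n}(\Sym^*(V^*))$ is spanned by monomials with $\omega$-sequence $(n-1,n-2,\ldots,1)$ (itself needing justification beyond ``Peterson--Wood plus Kameko''), there are $\prod_{j=1}^{n-1}\binom{n}{j}$ such monomials, vastly more than $2^{\binom{n}{2}}$; the statement that ``imposing the standard Steenrod relations pins the count down to exactly $2^{\binom{n}{2}}$'' is precisely the hard combinatorial content for which Walker and Wood introduced semistandard Young tableaux, and your proposal supplies no mechanism for it. So the bound $\dim_{\F_2}\Quot^{2^n-1-n}(\Sym^*(V^*))\le 2^{\binom{n}{2}}$ is asserted, not established.

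For comparison, the paper's proof avoids both issues by working with the finite quotient $\RR(V^*,1)=\Sym^*(V^*)/I(V^*,1)$: Proposition \ref{lsop-dual} together with the matroid property of $\Delta(V^*,k)$ and the Solomon--Tits theorem identifies the top degree $\RR^{d}(V^*,1)$, $d=2^n-1-n$, with $\St_n$ (this is where the dimension $2^{\binom{n}{2}}$ comes from, with no tableau combinatorics and no first-occurrence input); Lemma \ref{Vn} plus the $\chi$-trick shows every element of $I(V^*,1)$ of degree $\le d$ is hit, so $\Quot^{d}(\Sym^*(V^*))\cong\Quot^{d}(\RR(V^*,1))$ (Proposition \ref{Rmod}); and Proposition \ref{emb} shows $\RR^{d}(V^*,1)$ is $\P$-indecomposable by embedding $\RR_{n,1}$ into products of truncated polynomial algebras. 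These three facts give both the lower and upper bounds simultaneously and generalize to all $\F_q$, which is exactly what your outline's final sentence anticipates but does not carry out.
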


Walker and Wood proved this by establishing a link between the hit problem over $\F_2$ and Young tableaux. They proved that in some generic degree $d$, the semistandard tableaux can be used to index a generating set for $\Quot^d(\Sym^*(V^*))$. In degree $2^n-1-n$, the hook formula gives the upper bound $\dim_{\F_2}\Quot^{2^n-1-n}(\Sym^*(V^*))\le 2^{\frac{n(n-1)}{2}}$ and the equality then follows from the first occurrence of the Steinberg module in this degree \cite{MP83,KuhnMitchell86}.

The purpose of this paper is to present a new approach to the above result which is valid for all finite fields. The point we discover here is that there is a finite quotient $\RR(V^*)$ of $\Sym^*(V^*)$ such that its top-degree submodule, $\RR^{(q^n-1)/(q-1)-n}(V^*)$, is $\P$-indecomposable and isomorphic to the Steinberg module, and that the natural projection $\Quot^i(\Sym^*(V^*))\twoheadrightarrow \Quot^i(\RR(V^*))$ is an isomorphism in the range $0\le i\le (q^n-1)/(q-1)-n$. A part from classical tools in studying the hit problem, a novel one which we are going to employ in this work is the Stanley-Reisner ring of a simplicial complex \cite{Stanley-book1996}. We note that our method also leads to a formula for the dimension of $\Quot(\Sym^*(V^*))$ in degree $q^{n-1}-n$ and as a by product, for $q=2$, we obtain a decomposition of the Steinberg summand of a variant of $\RR(V^*)$ into a direct sum of Brown-Gitler modules \cite{BG73}.

We now describe the main results of the paper. For this let us choose a nonzero vector $u_\ell$ on each line $\ell$ in $V^*$. 
Given a subspace $W$ of $V^*$, denote by $\Line_W$ the set of all lines in $W$. 
Put $\L_W=\prod_{\ell\in \Line_W} u_\ell$, the ``product of all lines" in $W$. If $W'$ is a subspace of $W$, put $\V_{W',W} =\prod_{\ell\in \Line_{W} \setminus \Line_{W'}} u_\ell$. 
If $\{x_1,\ldots,x_n\}$ is a basis of $V^*$ then it is well-known that $\L_{V^*}$ is (up to a nonzero scalar) the polynomial $$\L_n=\L(x_1,\ldots,x_n):=\det(x_j^{q^{i-1}})_{1\le i,j\le n}$$ introduced by Dickson \cite{Dic11}. Similarly, if $H$ is the hyperplane generated by $x_1,\ldots,x_{n-1}$ then $\V_{H,V^*}$ is (up to a nonzero scalar) the polynomial
$$\V_n=\V(x_1,\ldots,x_n):=\prod_{\lambda_i\in \F_q}(\lambda_1x_1+\cdots+\lambda_{n-1}x_{n-1}+x_n)$$ introduced by M\`ui \cite{Mui75}.

For $k\ge 1$, let $I(V^*,k)$ (or $I_{n,k}$) denote the ideal of $\Sym^*(V^*)$ 
generated by the polynomials $\V_{H,V^*}^k$, where $H$ runs over the set of all hyperplanes of $V^*$. Since $\GL_n$ acts transitively on the set of hyperplanes of $V^*$, it is equivalent to say that $I(V^*,k)$ is generated by the orbit of $\V_n^k$ under the action of $\GL_n$. 
 Let $\RR({V^*,k})$ (or $\RR_{n,k}$) denote the quotient $\Sym^*(V^*)/I(V^*,k)$. It is clear that the natural projection $\Sym^*(V^*)\twoheadrightarrow\RR({V^*,k})$ is both $\P$-linear and $\GL_n$-linear.


The Steinberg representation $\St_n$ \cite{Ste56} is a projective absolutely irreducible representation of $\GL_n$ of dimension $q^{\frac{n(n-1)}{2}}$. It is isomorphic to the right $\F_q[\GL_n]$-module $\st_n\cdot \F_q[\GL_n]$ where $\st_n$ is the Steinberg idempotent defined by 
$$\st_n=\frac{1}{[\GL_n:U_n]}\sum_{b\in B_n,\ \sigma\in \Sigma_n } \sgn(\sigma)b\sigma,$$
$B_n, U_n, \Sigma_n$ denoting respectively the subgroup of upper triangular matrices, the subgroup of upper triangular matrices with 1's on the diagonal, and the symmetric group of permutation matrices. Let $\Det^{i}$, $i\in \Z$, denote the $i$-th power of the determinant representation of $\GL_n$. Then we have $\Det^{i}\cong\Det^j$ if $i\equiv j\mod (q-1)$ and $\Det^{0},\ldots,\Det^{q-2}$ form a complete set of distinct one-dimensional representations of $\GL_n$ over $\F_q$. A twisted Steinberg module is defined to be a tensor product $\St_n\otimes \Det^i$ and this is again a projective absolutely irreducible representation of $\GL_n$ of dimension $q^{\frac{n(n-1)}{2}}$. The idempotent corresponding to $\St_n\otimes \Det^i$ is denoted by $\st_n^{(i)}$ and is given by $\st_n^{(i)}=\phi_i(\st_n)$ where $\phi_i$ is the automorphism of $\F_q[\GL_n]$ given by $\phi_i(g)=\Det^i(g^{-1})g$ (see \cite[\S 2]{Mitchell85}).

\begin{thm} \label{main1}Put $d:=\frac{k(q^n-1)}{q-1}-n$.
	\begin{enumerate}
		\item $\RR(V^*,k)$ is trivial in degree greater than $d$, and in this degree, it is isomorphic to the twisted Steinberg module $\St_n\otimes \Det^{k-1}$.
		\item 	For $k=q^sr$ with $1\le r\le q-1$, the following hold: 
		\begin{enumerate}
			\item For each $0\le i\le d$, the natural projection $\Sym^*(V^*)\twoheadrightarrow \RR(V^*,k)$ induces an isomorphism of $\GL_n$-modules $\Quot^i(\Sym^*(V^*))\cong \Quot^i(\RR(V^*,k))$. 
			\item The top-degree module $\RR^{d}(V^*,k)$ is $\P$-indecomposable.
		\end{enumerate}
	\end{enumerate}
\end{thm}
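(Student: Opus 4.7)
The plan is to establish the four claims in the order (1)(a), (1)(b), (2)(a), (2)(b), viewing $\RR(V^*,k)$ as a Steenrod-equipped variant of the Stanley--Reisner ring of the rank-$n$ matroid complex on $\Line_{V^*}$, whose top-degree piece houses exactly one copy of the twisted Steinberg module.

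For (1)(a), finite-dimensionality of $\RR(V^*,k)$ is immediate because the zero locus of $I(V^*,k)$ in $V$ is $\{0\}$: for $0\neq v\in V$ and $H=v^\perp\subset V^*$, one has $\V_{H,V^*}(v)=\prod_{\ell\not\subset H}u_\ell(v)\neq 0$. For the sharp bound $d=\frac{k(q^n-1)}{q-1}-n$, I would degenerate $I(V^*,k)$ to its initial ideal under a suitable term order, identify the associated graded with a ``truncated'' Stanley--Reisner-type ring on $\Line_{V^*}$, and extract the top socle degree via Cohen--Macaulayness of the matroid complex together with an $h$-polynomial computation. For (1)(b), I would exhibit the candidate class $\theta:=[\,\L_{V^*}^{\,k}/(x_1\cdots x_n)\,]\in\RR^d$ (well-defined since $x_1\cdots x_n$ divides $\L_{V^*}$, and of degree exactly $d$), verify directly that $\theta\neq 0$ in $\RR^d$, and apply the Steinberg idempotent $\st_n^{(k-1)}$ to project into the $\St_n\otimes\Det^{k-1}$-isotypic component. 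Combined with the upper bound $\dim\RR^d\le q^{n(n-1)/2}$ from (1)(a), absolute irreducibility of the twisted Steinberg module then forces the asserted isomorphism; the $\Det^{k-1}$ twist is pinned down by comparison with the $k=1$ case via multiplication by $\L_{V^*}^{k-1}$.

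For (2)(a), the assertion is equivalent to $I(V^*,k)\cap\Sym^i(V^*)\subset \P^+\Sym^*(V^*)$ for all $i\le d$, so it suffices to exhibit every product $g\,\V_H^{\,k}$ of total degree at most $d$ as Steenrod-hit. When $s\ge 1$, the identity $\V_H^{\,k}=(\V_H^{\,rq^{s-1}})^q=\P^{r q^{s-1} q^{n-1}}(\V_H^{\,rq^{s-1}})$ (from $\P^{\deg f}(f)=f^q$) combined with a standard Cartan-formula argument strips off the external factor $g$. When $s=0$, so that $k=r\in\{1,\ldots,q-1\}$, there is no $q$-th power to peel off; the proof must then invoke the Dickson-type expansion $\V_H=x_n^{q^{n-1}}+\sum_{i=0}^{n-2}\pm c_{n-1,i}\,x_n^{q^i}$, decompose $\V_H^{\,r}$ summand-by-summand, and recognize each piece as a $\P^+$-image within the allowed degree range.

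For (2)(b), since $\P$ commutes with the $\GL_n$-action, $\P^+\RR\cap\RR^d$ is a $\GL_n$-submodule of the absolutely irreducible $\RR^d$ (by (1)(b)), hence is either $0$ or all of $\RR^d$. By (2)(a) the second alternative would force $\Quot^d(\Sym^*(V^*))=0$; ruling this out amounts to producing a single non-$\P^+$-hit class in $\Sym^d(V^*)$, which the polynomial $\L_{V^*}^{\,k}/(x_1\cdots x_n)$ provides after projection by $\st_n^{(k-1)}$ (in the spirit of Mitchell--Priddy's analysis of the Steinberg summand of the polynomial algebra). The principal technical obstacle is (2)(a) in the $s=0$ regime, where the Dickson-type decomposition must thread a tight degree bookkeeping throughout---this is the step where the paper's Stanley--Reisner technology is most crucially deployed.
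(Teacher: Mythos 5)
Your outline for part (1) is in the same spirit as the paper, but the two steps you wave at are exactly where the substance lies, and your parts (2.a)--(2.b) contain genuine gaps. For (2.a): when $s\ge 1$, writing $\V_n^{k}=\P^{N}(\V_n^{k/q})$ with $N=\deg \V_n^{k/q}$ and applying the $\chi$-trick to $g\cdot\P^{N}(\V_n^{k/q})$ moves the operation onto $g$ as $\chi(\P^{N})(g)$, and there is \emph{no} instability statement forcing $\chi(\P^{N})(g)=0$ (vanishing of conjugate operations is governed by an $\alpha$-function criterion, not by $N>\deg g$), so nothing is ``stripped off''; even granting some reduction, you land in the $s=0$ case with a cofactor of uncontrolled degree, and your $s=0$ plan (``recognize each piece as a $\P^{+}$-image'') is precisely the missing content. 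The paper supplies it as Lemma \ref{Vn}, the identity $\V(x_1,\ldots,x_{n-1},y_1)^{q^s}\cdots\V(x_1,\ldots,x_{n-1},y_r)^{q^s}=(-1)^{n-1}\sum_{i}\chi(\P^{\frac{(q^{n-1}-1)k}{q-1}-i})(e_i\,y_1^{q^s}\cdots y_r^{q^s})$, proved via the total conjugate operation $\Ph$ and an $\alpha$-count; the $\chi$-trick then puts the \emph{unconjugated} $\P^{\frac{(q^{n-1}-1)k}{q-1}-i}$ on $f$, where instability applies. (This step has nothing to do with Stanley--Reisner theory, contrary to your closing remark.) For (2.b): you rule out $\RR^{d}$ being entirely hit by importing a non-hit class of $\Sym^{d}(V^*)$ ``in the spirit of Mitchell--Priddy''; but non-vanishing of $\Quot^{d}(\Sym^*(V^*))$ in these degrees is exactly the first-occurrence-type input the theorem is meant to supersede, and it is available in the literature only for $q=2$ (Mitchell--Priddy, Inoue), not for general $q$ and $k=q^{s}r$ — so as written the argument is circular or incomplete. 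The paper instead proves indecomposability internally: Lemma \ref{ideal-rel} gives $I(V^*,k)+(y^k)=I(W,qk)+(y^k)$, hence $\RR_{n,k}$ embeds $\P$-linearly into products of $\RR_{n-1,qk}\otimes\RR_{1,k}$ and, inductively, into products of $\F_q[x]/(x^{q^{n-1}k})\otimes\cdots\otimes\F_q[x]/(x^{k})$, where a spike/binomial-coefficient computation yields $\chi(\P^{i})(u)=0$ whenever $|u|+qi>d$ and thus top-degree indecomposability.

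For part (1), your route to the quantitative statements is not established. The proposed Gr\"obner degeneration is unjustified (you would need to prove the initial ideal of $I(V^*,k)$ is the Stanley--Reisner ideal of a matroid complex) and is not what the paper does: there $\RR(V^*,k)$ is identified \emph{exactly} with $\F_q[\Delta(V^*,k)]$ modulo a linear system of parameters arising from the linear embedding $\iota:V\hookrightarrow\F_q\langle\Line\sqcup\cdots\sqcup\Line\rangle$ (Lemmas \ref{Deltamatroid}, \ref{iota-property} and Proposition \ref{lsop-dual}); note also that the relevant complex lives on $k$ disjoint copies of $\Line_{V^*}$, not on $\Line_{V^*}$ itself, so your complex does not even see $k$. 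More importantly, both your upper bound $\dim\RR^{d}\le q^{n(n-1)/2}$ and the non-vanishing of $\theta=\L_n^{k}/(x_1\cdots x_n)$ in $\RR^{d}$ come from one computation, $h_d=\pm\tilde\chi(\Delta(V^*,k)^{*})$, which the paper evaluates by showing the Alexander dual is homotopy equivalent to the Tits building (Quillen fiber lemma) and invoking Solomon--Tits; ``verify directly that $\theta\neq 0$'' is not an argument, and nothing in your sketch produces the number $q^{n(n-1)/2}$. Once these are in place, the rest of your (1) (the $\st_n^{(k-1)}$-fixed class $\gamma$, surjectivity from the orbit spanning, irreducibility) matches the paper.
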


As a consequence, we obtain the following generalization of the above result of Walker and Wood.
\begin{cor}
	For $k=q^sr$ with $1\le r\le q-1$, there is an isomorphism of $\GL_n$-modules: 
	$$\Quot^{d}(\Sym^*(V^*))\cong \St_n\otimes \Det^{k-1}.$$
\end{cor}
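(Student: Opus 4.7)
The plan is to deduce the corollary as a direct consequence of Theorem \ref{main1}, with essentially no additional work. First, I would apply part (2)(a) of the theorem in the specific degree $i = d$. This gives a $\GL_n$-equivariant isomorphism
$$\Quot^d(\Sym^*(V^*)) \;\cong\; \Quot^d(\RR(V^*,k)),$$
reducing the problem to computing the top-degree module of $\P$-indecomposables of $\RR(V^*,k)$.

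Next, I would combine parts (1) and (2)(b). Part (1) tells us that $\RR(V^*,k)$ is concentrated in degrees $\le d$ and that the top-degree piece $\RR^d(V^*,k)$ is isomorphic as a $\GL_n$-module to $\St_n \otimes \Det^{k-1}$. The $\P$-indecomposability statement of part (2)(b) should be read as saying that no nonzero element of $\RR^d(V^*,k)$ lies in the image of $\P^+$ acting on lower-degree components; equivalently, the natural surjection $\RR^d(V^*,k) \twoheadrightarrow \Quot^d(\RR(V^*,k))$ is an isomorphism. Chaining these three identifications yields
$$\Quot^d(\Sym^*(V^*)) \;\cong\; \Quot^d(\RR(V^*,k)) \;=\; \RR^d(V^*,k) \;\cong\; \St_n \otimes \Det^{k-1},$$
which is the desired isomorphism of $\GL_n$-modules.

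There is no real obstacle to overcome in the corollary itself: it is a purely formal repackaging of Theorem \ref{main1}, and the $\GL_n$-equivariance at each step is built into the constituent statements. The substance of the result lies entirely upstream, in the three parts of the theorem --- particularly the nontrivial assertions that the hit ideal of $\Sym^*(V^*)$ agrees with that of $\RR(V^*,k)$ through the critical degree $d$, and that the Steenrod algebra acts through $\P^+$ trivially onto the top degree of $\RR(V^*,k)$.
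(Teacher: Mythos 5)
Your argument is correct and coincides with the paper's own proof: the paper likewise chains $\St_n\otimes \Det^{k-1}\cong \RR^d(V^*,k)\xrightarrow{\cong}\Quot^d(\RR(V^*,k))\xleftarrow{\cong}\Quot^d(\Sym^*(V^*))$, citing parts (1), (2.b) and (2.a) of Theorem \ref{main1} respectively. Your reading of (2.b) as saying that no nonzero top-degree element of $\RR(V^*,k)$ is hit, so that $\RR^d(V^*,k)\twoheadrightarrow\Quot^d(\RR(V^*,k))$ is an isomorphism, is exactly the intended meaning.
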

This is because we have a sequence of isomorphisms $\GL_n$-modules: 
$$\St_n\otimes \Det^{k-1} \cong \RR^d(V^*,k) \xrightarrow{\cong} \Quot^{d}(\RR(V^*,k)) \xleftarrow{\cong} \Quot^{d}(\Sym^*(V^*)),$$
where the first isomorphism follows from Part (1) of Theorem \ref{main1} (1), the second from (2.b) and the third from (2.a).

The strategy for the proof of Theorem \ref{main1}(1) is as follows. We consider a simplicial complex $\Delta(V^*,k)$ whose Alexander dual is homotopy equivalent to the Tits building of $\GL_n$. The complex $\Delta(V^*,k)$ is a matroid complex and so the Stanley-Reisner ring $\F_q[\Delta(V^*,k)]$ is Cohen-Macaulay. The quotient module $\RR(V^*,k)$ defined above is then showed to be isomorphic to the quotient of $\F_q[\Delta(V^*,k)]$ by the ideal generated by a well-chosen regular sequence of $\F_q[\Delta(V^*,k)]$. The linear structure of $\RR(V^*,k)$, in particular of its top-degree submodule, will follow from some classical results in the theory of Stanley-Reisner rings \cite{Stanley-book1996}.  
In order to prove Theorem \ref{main1} (2.a), we need to show that all elements of degree $\le d$ in the ideal $I(V^*,k)$ are \textit{hit} (i.e. $\P$-decomposable) in $\Sym^*(V^*)$. For example, when $k=1$, this is proved by using the $\chi$-trick \cite{Wood89} and the following identity (Lemma \ref{Vn}):
$$\V_n=(-1)^{n-1}\sum_{i=0}^{n-1}\chi(\P^{\frac{q^{n-1}-1}{q-1}-i})\big( e_ix_n \big),$$ where  $\chi:\P\to\P$ is the canonical anti-automorphism, and $e_i$ is the $i$-th elementary symmetric function of $x_1^{q-1},\ldots,x_{n-1}^{q-1}$. Theorem \ref{main1} (2.b) will be proved by showing that $\RR(V^*,k)$ can be 
embedded in a direct product of copies of the quotient $\F_q[x_1,\ldots,x_n]/(x_1^{q^{n-1}k},\ldots,x_{n-1}^{qk},x_n^k)$ (which is an example of Poincar\'e duality algebras with trivial Wu classes \cite{MeyerSmith}). 


Our method can also be used to obtain the following: 

\begin{thm}\label{cuspidal}
	$\dim_{\F_q}\Quot^{q^{n-1}-n}(\Sym^*(V^*))=(q-1)(q^2-1)\cdots (q^{n-1}-1),$ $n\ge 2$. 
\end{thm}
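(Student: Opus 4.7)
The plan is to parallel the strategy behind Theorem \ref{main1}: produce a different finite quotient of $\Sym^*(V^*)$, tailored so that its top degree is exactly $q^{n-1}-n$ and its top-degree submodule has $\F_q$-dimension $(q-1)(q^2-1)\cdots(q^{n-1}-1)$.

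First, I would construct a $\GL_n$-equivariant finite quotient $R'$ of $\Sym^*(V^*)$, presented as the Stanley-Reisner ring of a suitable matroid complex on the set of lines of $V^*$ modulo a well-chosen linear system of parameters of length $n$. A natural candidate for this matroid is a restriction or truncation of the matroid $\Delta(V^*,1)$ used in Theorem \ref{main1}. Since matroid complexes are Cohen-Macaulay, the Hilbert series of $R'$ is read off from the $h$-vector of the underlying complex, reducing the dimension claim to a combinatorial count of bases; one expects the top $h$-number to equal $(q-1)(q^2-1)\cdots(q^{n-1}-1)$.

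Next, I would verify that the defining ideal $I' := \ker(\Sym^*(V^*)\twoheadrightarrow R')$ consists of hit elements in degrees $\le q^{n-1}-n$. The primary tool is the $\chi$-trick together with identities of the type stated in Lemma \ref{Vn}, adapted so as to express each generator of $I'$ as $\chi(\P^a)(z)$ for some $a>0$ and some $z\in\Sym^*(V^*)$. Such a certification yields a surjection of $\GL_n$-modules
\[
\Quot^{q^{n-1}-n}(\Sym^*(V^*)) \twoheadrightarrow {R'}^{q^{n-1}-n}.
\]
Finally, I would prove that ${R'}^{q^{n-1}-n}$ is $\P$-indecomposable by embedding $R'$ into a direct product of Poincar\'e duality algebras with trivial Wu classes of the form $\F_q[x_1,\ldots,x_n]/(x_1^{a_1},\ldots,x_n^{a_n})$, following the method used for Theorem \ref{main1}(2.b). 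Indecomposability upgrades the surjection into an isomorphism, forcing equality of dimensions.

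The main obstacle is the first step: pinning down the precise matroid complex whose $h$-vector realizes the cuspidal dimension in codegree $n$, and proving that the associated linear system of parameters is compatible with the Steenrod action. Once this combinatorial match is secured, the $\chi$-trick certification in step two and the Poincar\'e duality embedding in step three should follow the templates already developed for Theorem \ref{main1}.
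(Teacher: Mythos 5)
Your outline reproduces the general template of Theorem \ref{main1} (Stanley--Reisner quotient of $\Sym^*(V^*)$, $h$-vector count, $\chi$-trick to show the ideal is hit in low degrees, embedding into truncated polynomial algebras for indecomposability), and steps two and three would indeed go through essentially as in the paper once the right quotient is in hand. But the step you defer -- ``pinning down the precise matroid complex whose $h$-vector realizes the cuspidal dimension'' -- is precisely the content of the paper's proof, and your proposal contains no argument for it: the assertion that ``one expects the top $h$-number to equal $(q-1)(q^2-1)\cdots(q^{n-1}-1)$'' is exactly what has to be proved. The paper's construction is not a complex on the set of all lines of $V^*$: one fixes a linear hyperplane $W\subset V^*$ and takes the complex $K$ on the vertex set $E$, an affine hyperplane parallel to $W$ not through the origin (equivalently, chosen representatives of the lines \emph{off} $W$), with faces the complements $E\setminus F$ of affinely spanning sets $F$. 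Its Stanley--Reisner ideal pulls back to the ideal generated by $\Phi_H=\prod_{\alpha\in E\setminus H}\alpha$, $H$ an affine hyperplane of $E$, and the top $h$-number is computed from the Alexander dual $K^*$: by the Quillen fiber lemma, $K^*$ is homotopy equivalent to Lusztig's poset of proper affine subspaces of $E$, whose only nonvanishing reduced homology is the affine Steinberg module of dimension $(q-1)(q^2-1)\cdots(q^{n-1}-1)$. This homotopy-theoretic identification is the key new input, and without it (or some substitute count of the $h$-vector) the dimension formula is not established.

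Two further cautions. First, your requirement that the quotient $R'$ be $\GL_n$-equivariant cannot be met by the construction that works: the ideal $(\Phi_H)$ depends on the choice of $E$ and is stable only under the affine subgroup $\Aff(E)\subset\GL(V^*)$; this is harmless because the theorem is a dimension statement, but insisting on $\GL_n$-equivariance would steer you away from the correct complex (note also that for the $\GL_n$-equivariant complex $\Delta(V^*,1)$ the top degree is $\frac{q^n-1}{q-1}-n$, not $q^{n-1}-n$, so a genuinely different, symmetry-breaking vertex set is forced). Second, for the indecomposability step the paper does not embed directly into monomial complete intersections but first shows $I+(y)=I(W,q-1)+(y)$ for each $y\in E$, giving an embedding of $\Sym^*(V^*)/I$ into a product of copies of $\RR(W,q-1)$, and then quotes Proposition \ref{emb} for $\RR_{n-1,q-1}$; your plan is compatible with this, but the reduction to the already-proved case $k=q-1$ in $n-1$ variables is the efficient route.
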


This will be proved by considering a simplicial complex whose Alexander dual is homotopy equivalent to a complex employed by G. Lusztig \cite{Lusztig74} in his work on constructing discrete series (a.k.a. cuspidal representations) of $\GL_n$.





To state the final result of the paper, suppose $q=k=2$ and denote by $\A$ the mod $2$ Steenrod algebra. Using the work of Inoue \cite{Ino02} on the hit problem of the Steinberg summand, we will show that the Steinberg summand of $\RR(V^*,2)$ is decomposed into a direct sum of suspensions of Brown-Gitler modules \cite{BG73}. 
\begin{thm}\label{decomposition} Put $d=2(2^n-1)-n=\sum_{j=1}^n(2^j-1)$.
	There is an isomorphism of $\A$-modules
	$$\bigoplus_{j=0}^{n} \Sigma^{d-(2^j-1)}\BG(2^j-1) \xrightarrow{\cong} \RR_{n,2}\cdot \st_n.$$
	Here $\BG(k)$ denotes the Brown-Gitler module 
	$\BG(k):=\A/\A\langle \chi(\Sq^i) \mid 2i>k\rangle$ and $\RR_{n,2}\cdot \st_n$ denotes the direct summand of $\RR_{n,2}$ associated to the Steinberg idempotent $\st_n$.
\end{thm}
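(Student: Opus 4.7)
The plan is to realize each Brown--Gitler summand concretely inside $\RR_{n,2}\cdot \st_n$ by producing explicit generators, verify the Brown--Gitler relations on them, and finally conclude by a Poincar\'e series comparison.

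First I would identify the generators. By Theorem \ref{main1}(2.a) applied to the Steinberg isotype, $\Quot(\RR_{n,2}\cdot \st_n)$ agrees with $\Quot(\Sym^*(V^*)\cdot \st_n)$ throughout degrees $0,\ldots,d$. Inoue's study \cite{Ino02} of the hit problem for the Steinberg summand of the polynomial algebra then supplies explicit monomial representatives $g_0,\ldots,g_n$ with $|g_j|=d-(2^j-1)$ whose images form a basis of these $\A$-indecomposables. For instance, $g_n$ can be taken as the M\`ui-type monomial $x_2 x_3^3 x_4^7\cdots x_n^{2^{n-1}-1}$ acted on by $\st_n$ (matching the Walker--Wood degree $2^n-1-n$), and the remaining $g_j$'s are of similar shape with parts of the M\`ui product replaced by appropriately lower powers. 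By the graded Nakayama lemma these $n+1$ elements generate $\RR_{n,2}\cdot \st_n$ as an $\A$-module, producing a surjection
\[ \phi\colon\bigoplus_{j=0}^n \Sigma^{d-(2^j-1)}\A\twoheadrightarrow \RR_{n,2}\cdot \st_n,\qquad \phi(\iota_j)=g_j,\]
where $\iota_j$ denotes the top class of the $j$-th summand.

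The heart of the argument is then to show that $\chi(\Sq^i)\cdot g_j=0$ in $\RR_{n,2}$ whenever $2i>2^j-1$, so that $\phi$ factors through $\tilde\phi\colon\bigoplus_{j=0}^n \Sigma^{d-(2^j-1)}\BG(2^j-1)\twoheadrightarrow \RR_{n,2}\cdot \st_n$. For this I would exploit the embedding from the proof of Theorem \ref{main1}(2.b), which realizes $\RR_{n,2}$ as a submodule of a product of copies of the Poincar\'e-duality algebra $\F_2[x_1,\ldots,x_n]/(x_1^{2^n},x_2^{2^{n-1}},\ldots,x_n^{2})$. Since the truncation exponents are exact powers of two, the Wu formula combined with the vanishing of Wu classes \cite{MeyerSmith} pins down precisely which $\chi(\Sq^i)$'s annihilate the coordinates of $g_j$ in this product; the condition $i\ge 2^{j-1}$ exactly matches the threshold at which a Steenrod square must push a M\`ui-type coordinate past its truncation.

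Finally, to promote $\tilde\phi$ to an isomorphism I would compare Poincar\'e series. The series $P_{\BG(k)}(t)$ is given by Brown--Gitler's classical product formula \cite{BG73}, while $P_{\RR_{n,2}\cdot \st_n}(t)$ can be computed from the Cohen--Macaulay structure of $\F_2[\Delta(V^*,2)]$ established in the proof of Theorem \ref{main1}(1), combined with the Steinberg multiplicities in polynomial algebras of Mitchell--Priddy \cite{MP83}. The identity
\[\sum_{j=0}^{n} t^{d-(2^j-1)}\, P_{\BG(2^j-1)}(t)\;=\;P_{\RR_{n,2}\cdot\st_n}(t)\]
is then a telescoping check exploiting the product form of the Brown--Gitler series, and it forces $\tilde\phi$ to be injective. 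The principal obstacle is the middle step: the required vanishings $\chi(\Sq^i)g_j=0$ sit well inside the Peterson range and are not forced by Theorem \ref{main1}(1) alone, so an explicit Wu-formula calculation in the auxiliary Poincar\'e-duality quotient is unavoidable.
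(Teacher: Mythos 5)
Your first two steps essentially track the paper's argument: Inoue's generators of $\F_2[x_1,\ldots,x_n]\cdot \st_n$ project to an $\A$-generating set $\alpha_0,\ldots,\alpha_n$ of $\RR_{n,2}\cdot\st_n$ in degrees $d-(2^j-1)$, and the Brown--Gitler relations $\chi(\Sq^i)\alpha_j=0$ for $2i>2^j-1$ do follow from the embedding of $\RR_{n,2}$ into a product of truncated polynomial algebras. In fact Proposition \ref{emb}(2) already gives the stronger, elementwise statement that $\chi(\Sq^i)(u)=0$ for every $u\in \RR_{n,2}$ with $|u|+2i>d$ (equivalently, $\Sigma^d\Dual(\RR_{n,2})$ is unstable), so no separate Wu-class computation for the particular generators is needed; your proposed Wu-formula verification is a workable substitute for the paper's spike/$\chi$-trick argument, but it is re-proving an available fact rather than the missing ingredient.

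The genuine gap is the final counting step. You propose to compute $P(\RR_{n,2}\cdot\st_n,t)$ from the Cohen--Macaulay structure of $\F_2[\Delta(V^*,2)]$ together with the Steinberg multiplicities of Mitchell--Priddy, but neither input suffices: the $h$-vector controls only the total Hilbert series of $\RR_{n,2}$ and says nothing about its $\GL_n$-isotypic refinement below the top degree, while the multiplicities in \cite{MP83,KuhnMitchell86} concern the full polynomial algebra $\Sym^*(V^*)$, not the quotient $\RR_{n,2}$. Since $\St_n$ is projective, one would need the Steinberg multiplicity of each graded piece of the ideal $I_{n,2}$, and that is exactly the nontrivial content; given your surjection $\tilde\phi$, the asserted equality of Poincar\'e series is equivalent to the theorem, so as written the step is circular. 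Note that, having the surjection, you only need a degreewise \emph{lower} bound on $\dim \RR_{n,2}\cdot\st_n$. The paper obtains it by exhibiting the explicit family $\Sq^{j_1}\cdots\Sq^{j_n}\bigl(\tfrac{1}{x_1\cdots x_n}\bigr)$ with $2^n\ge j_1\ge 2j_2\ge\cdots\ge 2^{n-1}j_n>0$, each fixed by $\st_n$, and proving linear independence in $\RR_{n,2}$ by projecting to $\F_2[x_1,\ldots,x_n]/(x_1^{2^n},\ldots,x_n^2)$ (legitimate because $I_{n,2}\subseteq (x_1^{2^n},\ldots,x_n^2)$ by Lemma \ref{ideal-rel}) and using the leading-monomial formula of \cite{MP83}; the resulting count matches the Brown--Gitler side via Mahowald's exact sequences, which identify $\bigoplus_j \Sigma^{d-(2^j-1)}\BG(2^j-1)$ with $\Sigma^{2^n-1-n}\BG(2^n)$ at the level of Poincar\'e series, together with the admissible basis of \cite{BG73}. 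Without an explicit construction of this kind your argument does not close.
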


In an ongoing project, by using the work \cite{DavidJanuskewiz91} of M. W. Davis and T. Januszkiewicz, we interpret $\RR(V^*,k)$ as the mod $2$ cohomology of the orbit space $\rho(V^*,k)_\reg/V$, where $\rho(V^*,k)$ denotes the direct sum of $k$ copies of the reduced regular representation of $V$ and $\rho(V^*,k)_\reg$ denotes the regular part of the action of $V$ on $\rho(V^*,k)$. The decomposition in Theorem \ref{decomposition} suggests that there would exist a homotopy equivalence (of $2$-completed spectra)
$$\st_n\cdot \Sigma^\infty(\rho(V^*,2)_\reg/V)_+\simeq \bigvee_{j=0}^n \Sigma^{d-(2^j-1)} \bg(2^j-1),$$
$\bg(k)$ denoting the Brown-Gitler spectrum whose cohomology is $\BG(k)$. We intend to go back to this problem in a future work. 



\tableofcontents

\section{The Stanley-Reisner ring of a simplicial complex} \label{SR-ring}

In this section, we review some facts about the Stanley-Reisner ring of a simplicial complex which will be used in proving Theorem \ref{main1} (1). Our main reference is \cite{Stanley-book1996}. 

Let $S$ be a non-empty finite set. A \textit{simplicial complex} $\Delta$ on the vertex set $S$ is a collection of subsets of $S$ that is closed under inclusion. A subset $F$ of $S$ is called a \textit{face} of $\Delta$ if $F\in \Delta$ and is called a \textit{non-face} of $\Delta$ if $F\not\in \Delta$.
The \textit{dimension} of a face $F$ is $\dim F:=|F|-1$ and the \textit{dimension} of $\Delta$ is $\dim\Delta:=\max_{F\in \Delta} \dim F$.  
A maximal face under inclusion is called a \textit{facet} and $\Delta$ is said to be \textit{pure} if all facets have the same dimension.

Let $\k$ be a field and let $\k[X_s\mid s\in S]$ denote the polynomial algebra over $\k$ in the variables $X_s$ indexed by the vertex set $S$. 
Given a subset $F$ of $S$, let $X_F$ denote the monomial $\prod_{s\in F}X_s$.  
The \textit{face ring} (or the \textit{Stanley-Reisner ring}) $\k[\Delta]$ of $\Delta$ is then defined to be the quotient ring 
$$\k[\Delta]:=\k[X_s| s\in S]/I_\Delta,$$
where $I_\Delta$ is the \textit{Stanley-Reisner ideal} defined by
$I_\Delta:=(X_F| F\not \in \Delta).$ It is clear that $I_\Delta$ can be defined by using only monomials corresponding to minimal non-faces of $\Delta$.

Next recall that the Hilbert-Poincar\'e series of an $\N$-graded $\k$-vector space $M$ (with $\dim_\k M^i$ finite for all $i$) is defined by 
$P(M,t):=\sum_{i\ge 0} (\dim_\k M^i) t^i.$
Suppose that $\dim \Delta =d-1$ and regard the variables $X_s$ as being of degree one. The face ring $\k[\Delta]$ is then a graded ring and its Hilbert-Poincar\'e series is given by: 
\begin{equation}
P(\k[\Delta],t)=\sum_{i=-1}^{d-1}\frac{f_it^{i+1}}{(1-t)^{i+1}},
\end{equation}
where $f_{-1}=1$ and $f_i:=f_i(\Delta)$, $i\ge 0$, is the number of $i$-dimensional faces of $\Delta$  (\cite[Theorem 1.4]{Stanley-book1996}). The sequence $(f_0,\ldots,f_{d-1})$ is called the \textit{$f$-vector} of $\Delta$. Writing $P(\k[\Delta],t)$ in the form 
\begin{equation}
P(\k[\Delta],t)=\frac{h_0+h_1t+\cdots+h_dt^d}{(1-t)^d},
\end{equation}
then the sequence $(h_0,\ldots,h_d)$ is called the \textit{$h$-vector} of $\Delta$. In terms of the $f$-vector, each $h_k:=h_k(\Delta)$ is given by 
$h_k=\sum_{i=0}^{k}(-1)^{k-i} \binom{d-i}{k-i}f_{i-1}.$
In particular, 
$h_d=\sum_{i=0}^d(-1)^{d-i}f_{i-1}$ and so 
\begin{equation}
h_d(\Delta)=(-1)^{d-1}\tilde \chi (\Delta),
\end{equation}
where $\tilde \chi (\Delta)=\sum_{i\ge -1}(-1)^if_i=\sum_{i\ge -1}(-1)^i\dim_\k\Ht_i(\Delta;\k)$ is the reduced Euler characteristic of $\Delta$. Here $\Ht_i(\Delta;\k)$ denotes the reduced homology of $\Delta$ with coefficients in $\k$ (\cite[Definition 3.1]{Stanley-book1996}). 
We will also need to express $h_d$ in terms of the reduced characteristic of the Alexander dual of $\Delta$. Recall that the \textit{Alexander dual} $\Delta^*$ of $\Delta$ is the simplicial complex with the same vertex set $S$ such that a subset $F$ of $S$ is a face of $\Delta^*$ if the complement $S\setminus F$ is not a face of $\Delta$. Since for each $0\le i\le |S|$, the number of subsets of cardinality $i$ of $S$ is equal to $f_{i-1}(\Delta)+f_{|S|-i-1}(\Delta^*)$ and the alternating sum of these numbers is zero, it follows that $\tilde \chi (\Delta^*) +(-1)^{|S|} \tilde \chi (\Delta^*)=0$, and so  
\begin{equation}
h_d(\Delta)=(-1)^{|S|-d}\tilde \chi(\Delta^*).
\end{equation}

We review now some commutative algebra. Let $R$ be an $\N$-graded algebra over $\k$. The \textit{Krull dimension} of $R$, denoted $\dim R$, is the maximal number of algebraically independent homogeneous elements of $R$. Suppose $\dim R=d$. A sequence $(\theta_1,\ldots,\theta_d)$ of homogeneous elements of $R$ is called a \textit{homogeneous system of parameters} (\textit{h.s.o.p.}) if $R/(\theta_1,\ldots,\theta_d)$ is a finite-dimensional $\k$-vector space. Equivalently, $(\theta_1,\ldots,\theta_d)$ is a h.s.o.p if $\theta_1,\ldots,\theta_d$ are algebraically independent and $R$ is a finitely-generated $\k[\theta_1,\ldots,\theta_d]$-module. A h.s.o.p. $(\theta_1,\ldots,\theta_d)$ is called a \textit{linear system of parameters} (l.s.o.p.) if each $\theta_i$ is of degree one.

The Krull dimension of the face ring $\k[\Delta]$ is $1+\dim \Delta$ (\cite[Theorem 1.3]{Stanley-book1996}). Note that the existence of a l.s.o.p for $\k[\Delta]$ is assured by Noether's Normalization Lemma when $\k$ is infinite. When $\k$ is finite, one may need to pass to an infinite extension field which does not affect the Cohen-Macaulay property. 
We need the following result to recognize a l.s.o.p. in $\k[\Delta]$. 

\begin{lem}[{\cite[Lemma 2.4]{Stanley-book1996}}] \label{lsop} 
	\begin{enumerate}
		\item Let $\k[\Delta]$ be a face ring of Krull dimension $d$ and let $\theta_1,\ldots,\theta_d\in \k[\Delta]^1$. Then $(\theta_1,\ldots,\theta_d)$ is a l.s.o.p. for $\k[\Delta]$ if and only if for every face $F$ of $\Delta$, the restrictions ${\theta_1}_{|_F},\ldots,{\theta_d}_{|_F}$ span a vector space of dimension equal to $|F|$. Here the restriction $\theta_{|_F}$ of an element $\theta=\sum_{s\in S}\alpha_s X_s$ of $\k[\Delta]^1$ to a face $F$ is defined by $\theta_{|_F}=\sum_{s\in F}\alpha_s X_s$.
		\item If $(\theta_1,\ldots,\theta_d)$ is a l.s.o.p. for $\k[\Delta]$, then the quotient ring $\k[\Delta]/(\theta_1,\ldots,\theta_d)$ is spanned as a $\k$-vector space by the monomials $X^F$, $F\in \Delta$.
	\end{enumerate}
\end{lem}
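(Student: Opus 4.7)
The plan is to handle the two parts separately. For part (1), I would exploit the primary decomposition $I_\Delta=\bigcap_{F\text{ facet}} P_F$, where $P_F=(X_s\mid s\notin F)$, so that the minimal primes of $\k[\Delta]$ are exactly the $P_F$ with $F$ a facet and $\k[\Delta]/P_F\cong \k[X_s\mid s\in F]$ is a polynomial ring of Krull dimension $|F|$. A sequence of homogeneous elements of degree one is a h.s.o.p.\ iff the quotient is Artinian, iff the images in each $\k[\Delta]/P_F$ cut out a zero-dimensional subscheme, iff the restrictions $\theta_{i|_F}$ generate an ideal of finite codimension in $\k[X_s\mid s\in F]$. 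For linear forms in a polynomial ring, this last condition is equivalent to the restrictions spanning the full degree-one piece, that is, a vector space of dimension $|F|$. To pass from facets to arbitrary faces, observe that further restriction along an inclusion $F'\subseteq F$ is a coordinate projection, which sends a spanning set of linear forms to a spanning set; since every face sits inside some facet, the criterion extends to all of $\Delta$.

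For part (2), I would argue by a reduction algorithm showing that every element of $\k[\Delta]$ reduces, modulo $(\theta_1,\ldots,\theta_d)$, to a $\k$-linear combination of squarefree face-monomials $X^F=\prod_{s\in F}X_s$, $F\in\Delta$. Consider a nonzero monomial $X^\alpha=\prod_s X_s^{\alpha_s}$ with support $F=\{s\mid \alpha_s>0\}\in\Delta$ and some exponent $\alpha_s\ge 2$. The span condition from part (1) produces scalars $c_i\in\k$ with $X_s=\sum_i c_i\,\theta_{i|_F}$. Decomposing $\theta_i=\theta_{i|_F}+\theta_{i|_{S\setminus F}}$ and reducing modulo the l.s.o.p.\ gives
\begin{equation*}
X_s\equiv -\sum_i c_i\,\theta_{i|_{S\setminus F}}\pmod{(\theta_1,\ldots,\theta_d)},
\end{equation*}
a linear combination of variables indexed by $S\setminus F$. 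Multiplying by $X^\alpha/X_s$ (whose support is still $F$, since $\alpha_s\ge 2$) rewrites $X^\alpha$ modulo the l.s.o.p.\ as a sum of monomials of the same total degree, each with support $F\cup\{s'\}$ for some $s'\notin F$. Terms with $F\cup\{s'\}\notin\Delta$ vanish in $\k[\Delta]$, while the survivors have strictly enlarged support. Iterating, the support can only grow finitely often, and once $F$ is maximal among faces the only remaining substitution produces non-faces, killing the monomial. The elements surviving the whole process are precisely the squarefree face-monomials $X^F$, proving the spanning claim.

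The main obstacle is making the termination of this reduction precise, since a single substitution can simultaneously change the support and leave other exponents untouched. A clean way to organize it is to induct lexicographically on the pair $(|S|-|\mathrm{supp}(\alpha)|,\,\max_s\alpha_s)$: each reduction step either strictly decreases the first coordinate or produces a squarefree monomial on the current support, and once the first coordinate is minimal every one-vertex enlargement of the support yields a non-face, forcing the monomial to be zero modulo the l.s.o.p. Combining the two parts then yields the lemma exactly as stated.
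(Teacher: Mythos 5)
Your proposal is correct. Note that the paper does not prove this lemma at all: it is quoted verbatim from Stanley's book (Lemma 2.4 there, part (1) being due to Kind and Kleinschmidt), so the only comparison available is with that source, and your argument is essentially the standard one. Part (1) is sound: the minimal primes of $\k[\Delta]$ are the facet primes $P_F=(X_s\mid s\notin F)$ with $\k[\Delta]/P_F\cong\k[X_s\mid s\in F]$, finite-dimensionality of $\k[\Delta]/(\theta_1,\ldots,\theta_d)$ is equivalent to zero-dimensionality modulo each $P_F$, and for linear forms in a polynomial ring finite codimension is equivalent to spanning the degree-one piece; the passage from facets to arbitrary faces by coordinate projection (and the trivial converse, since facets are faces) is exactly right, and correctly requires no purity of $\Delta$. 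Part (2) is also correct: writing $X_s\equiv-\sum_i c_i\,\theta_{i|_{S\setminus F}}$ modulo the l.s.o.p.\ for $s$ in the support $F$, multiplying by $X^\alpha/X_s$ when $\alpha_s\ge 2$, and discarding the terms whose support becomes a non-face (these vanish in $\k[\Delta]$ because the squarefree monomial on a non-face lies in $I_\Delta$) rewrites $X^\alpha$ as a combination of monomials with strictly larger face support. Your only inefficiency is the termination bookkeeping: every substitution strictly enlarges the support of every surviving term, so downward induction on $|\mathrm{supp}(\alpha)|$ (equivalently, strict decrease of $|S|-|\mathrm{supp}(\alpha)|$) already terminates, with base case a facet where any enlargement is a non-face; the second lexicographic coordinate $\max_s\alpha_s$ is never needed.
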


Next recall that a sequence $(\theta_1,\ldots,\theta_d)$ of homogeneous elements of an $\N$-graded $\k$-algebra $R$ is called a \textit{regular sequence} if $\theta_{i+1}$ is not a zero-divisor in $R/(\theta_1,\ldots,\theta_i)$ for $0\le i<d$. Equivalently, $(\theta_1,\ldots,\theta_d)$ is a regular sequence if $\theta_1,\ldots,\theta_d$ are algebraically independent and $R$ is a finite-dimensional free $\k[\theta_1,\ldots,\theta_d]$-module. The $\k$-algebra $R$ is \textit{Cohen-Macaulay} if it admits a regular sequence. It is known that if $R$ is Cohen-Macaulay then any h.s.o.p. is a regular sequence. 
If the face ring $\k[\Delta]$ is Cohen-Macaulay, then we say that $\Delta$ is a \textit{Cohen-Macaulay complex} (over $\k$). A fundamental result of G. A. Reisner \cite{Reisner} states that $\Delta$ is Cohen-Macaulay over $\k$ if and only if for all $F\in \Delta$ and all $i<\dim (\lk F)$, we have $\Ht^i(\lk F; \k)=0$. Here $\lk F$ is the \textit{link} of $F$ defined by $\lk F=\{G\in \Delta| G\cup F\in \Delta, \ G\cap F=\emptyset \}$. 

Matroid complexes provide a rich source of Cohen-Macaulay complexes. 
A simplicial complex $\Delta$ on the vertex set $S$ is a \textit{matroid complex} if it satisfies the \textit{exchange property}: given $F, G\in\Delta$ with $|F|<|G|$, there exists $v\in  G\setminus F$ such that $F\cup \{v\}\in \Delta$. This is the property satisfied by the independent sets of a matroid \cite{Oxley}. It is known that a matroid complex is pure and for every linear order of $S$, the induced lexicographical order on the facets of the matroid complex $\Delta$ make it into a shellable complex \cite[Theorem 7.3.3]{Bjorner92}. 
Recall that a \textit{shellable complex} is a pure simplicial complex $\Delta$ together with an order of its facets $F_1,\ldots, F_s$ such that for each $1\le i\le s$, the faces of $\Delta_i$ which do not belong to $\Delta_{i-1}$ has a unique element (denoted by $r(F_i)$) with respect to inclusion (\cite[Definition 2.1]{Stanley-book1996}). Here $\Delta_0=\emptyset$ and $\Delta_i$ is the subcomplex generated by $F_1,\ldots,F_i$ defined by $\Delta_i:=2^{F_1}\cup \cdots \cup 2^{F_i}$, where $2^F:=\{G: G\subset F\}$. 
A shellable complex is Cohen-Macaulay over any field; furthermore, if $(\theta_1,\ldots,\theta_d)$ is a l.s.o.p. for $\k[\Delta]$, then $\k[\Delta]$ is a free $\k[\theta_1,\ldots,\theta_d]$-module with basis $\{X_{r(F_i)}: 1\le i\le s\}$ (\cite[Theorem 1.7, Corollary 1.8]{Bjorner84} or \cite[Theorem 2.5]{Stanley-book1996}).

%

%


We summarize the above discussions in the following proposition which will be used in the next sections. For this we identify the polynomial algebra $\k[X_s| s\in S]$ with the symmetric power algebra $\Sym^*(\k^S)$ where $\k^S$ denotes the $\k$-vector space of functions from $S$ to $\k$. The dual of $\k^S$ is the space $\k\langle S\rangle$ of formal sums $\sum_{s\in S}\alpha_s Y_s$ with $\alpha_s\in \k$ where $Y_s$ is dual to $X_s$. For each monomorphism of vector spaces $\iota: V\hookrightarrow \k\langle S\rangle$, let $\iota^*(I_\Delta)$ denote the image of the Stanley-Reisner ideal $I_\Delta$ under the induced epimorphism $\iota^*: \Sym^*(\k^S)\twoheadrightarrow \Sym^*(V^*)$. So $\iota^*(I_\Delta)$ is the ideal of $\Sym^*(V^*)$ generated by the polynomials $\iota^*(X^F)=\prod_{s\in F}\iota^*(X_s)$, $F\in \Delta$.

\begin{prop}\label{lsop-dual}
	Suppose $\dim_\k V=|S|-d$ and $\iota: V\hookrightarrow\k\langle S\rangle$ is a monomorphism which satisfies the property:
	
	{\rm (*)} For each face $F$ of $\Delta$, the intersection of $\iota(V)$ with the subspace $\k\langle F  \rangle$ of $\k\langle S\rangle$ is trivial. 
	
	\noindent Then $\Sym^*(V^*)/\iota^*(I_\Delta)$ is spanned as a $\k$-vector space by the $\iota^*(X^F)$, $F\in \Delta$. In particular, the top-degree subspace $(\Sym^*(V^*)/\iota^*(I_\Delta))^d$ is spanned as a $\k$-vector space by the set $\{\iota^*(X^F)|\text{$F$ is facet of $\Delta$}\}$. Furthermore, if $\Delta$ is Cohen-Macaulay over $\k$ (in particular if $\Delta$ is a matroid complex), then the Hilbert-Poincar\'e series of $\Sym^*(V^*)/\iota^*(I_\Delta)$ is $h_0+h_1t+\cdots+h_dt^d$, where $h_d$ can be computed by $h_d=(-1)^{d-1}\tilde{\chi}(\Delta)=(-1)^{|S|-d}\tilde \chi(\Delta^*).$ 
\end{prop}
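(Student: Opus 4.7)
The plan is to construct a linear system of parameters for $\k[\Delta]$ out of $\iota$ and then invoke Lemma~\ref{lsop}. Dualising $\iota$ yields a surjection $\iota^*\colon \k^S \twoheadrightarrow V^*$ whose kernel $K$ has dimension $d$. I would choose any basis $\theta_1,\ldots,\theta_d$ of $K$ and regard the $\theta_i$ as linear forms in $\Sym^*(\k^S)$. Under the natural pairing between $\k^S$ and $\k\langle S\rangle$, $K$ is precisely the annihilator of $\iota(V)\subset \k\langle S\rangle$. Since $K$ generates the kernel of the algebra surjection $\Sym^*(\k^S)\twoheadrightarrow \Sym^*(V^*)$, quotienting by $I_\Delta$ and by $(\theta_1,\ldots,\theta_d)$ in either order yields a canonical isomorphism
\[
\Sym^*(V^*)/\iota^*(I_\Delta) \ \cong\ \k[\Delta]/(\theta_1,\ldots,\theta_d),
\]
under which $\iota^*(X^F)$ corresponds to the class of $X^F$.

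The crucial step is to verify that $(\theta_1,\ldots,\theta_d)$ is a l.s.o.p.\ for $\k[\Delta]$; by Lemma~\ref{lsop}(1) this reduces to showing that, for every face $F\in\Delta$, the restriction map $K\to \k^F$, $\theta\mapsto \theta|_F$, is surjective. The kernel of the restriction $\k^S\twoheadrightarrow \k^F$ is the annihilator of $\k\langle F\rangle\subset \k\langle S\rangle$, so by linear-algebraic duality, surjectivity of $K\to \k^F$ is equivalent to injectivity of the composite
\[
\k\langle F\rangle \hookrightarrow \k\langle S\rangle \twoheadrightarrow \k\langle S\rangle/\iota(V),
\]
where I use that the annihilator of $K$ in $\k\langle S\rangle$ is exactly $\iota(V)$ (double-annihilator in finite dimension). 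The kernel of this composite is $\k\langle F\rangle\cap \iota(V)$, so hypothesis~(*) supplies precisely what is needed. This duality comparison is the only non-formal input of the argument.

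With the l.s.o.p.\ in hand, Lemma~\ref{lsop}(2) says that the classes of $X^F$ for $F\in \Delta$ span $\k[\Delta]/(\theta_1,\ldots,\theta_d)$; through the above isomorphism this becomes the desired spanning statement for the $\iota^*(X^F)$. The top-degree claim is then immediate, since the only faces of cardinality $d$ are top-dimensional facets. For the Cohen--Macaulay case (which includes matroid complexes, by shellability), any h.s.o.p.\ is automatically a regular sequence, so $\k[\Delta]$ is a free $\k[\theta_1,\ldots,\theta_d]$-module; multiplying the Hilbert--Poincar\'e series of $\k[\Delta]$ by $(1-t)^d$ then yields $h_0 + h_1 t + \cdots + h_d t^d$ as the Hilbert--Poincar\'e series of the quotient, and the two formulas for $h_d$ are immediate from the identities in terms of $\tilde\chi(\Delta)$ and $\tilde\chi(\Delta^*)$ recalled above.
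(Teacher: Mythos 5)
Your proof is correct and follows essentially the same route as the paper: you identify the kernel of $\iota^*$ (equivalently, the dual of the cokernel of $\iota$) as a candidate l.s.o.p., use the duality between surjectivity of the restriction $K\to\k^F$ and injectivity of $\k\langle F\rangle\to\k\langle S\rangle/\iota(V)$ supplied by hypothesis (*), and then conclude via Lemma \ref{lsop} and freeness over the parameter subalgebra in the Cohen--Macaulay case. The paper phrases the quotient identification as a base change $\k\otimes_{\Sym^*(W^*)}(\Sym^*(\k^S)/I_\Delta)$ rather than ``quotient in either order,'' but this is the same argument.
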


\begin{proof}
	Let $\lambda:\k\langle S\rangle \twoheadrightarrow W$ be the cokernel of $\iota$. So $\dim_\k W=d$ and there is a short exact sequence of vector spaces $0\to V\xrightarrow{\iota} \k\langle S\rangle \xrightarrow{\lambda} W\to 0$. For each face $F$ of $\Delta$, the triviality of $\iota(V)\cap \k\langle F\rangle$ implies the injectivity of the composition $\k\langle F\rangle \hookrightarrow \k \langle S\rangle \xrightarrow{\lambda} W$.  Taking the dual of this we see that the composition $W^* \xrightarrow{\lambda^*} \k^S \twoheadrightarrow \k^F$ is surjective. Note that $\k^S \twoheadrightarrow \k^F$ is the restriction map $\theta\mapsto \theta|_F$ defined in Lemma \ref{lsop} above. It follows that if we choose a basis ${w_1,\ldots,w_d}$ of $W^*$ and put $\theta_i=\lambda^*(w_i)$, then ${\theta_1}_{|_F},\ldots,{\theta_d}_{|_F}$ span $\k^F$, and so by Proposition \ref{lsop}(1), $(\theta_1,\ldots,\theta_d)$ is a l.s.o.p. for $\k[\Delta]$.  The map $\iota$ now induces an isomorphism of $\k$-algebras:
	\begin{equation}
	\k[\Delta]/(\theta_1,\ldots,\theta_d)\cong \k\otimes_{\Sym^*(W^*)} (\Sym^*(\k^S)/I_\Delta)\cong \Sym^*(V^*)/\iota^*(I_\Delta).
	\end{equation} 
	By Proposition \ref{lsop}(2), this isomorphism implies that the $\k$-vector space $\Sym^*(V^*)/\iota^*(I_\Delta)$ is spanned by the $\iota^*(X^F)$, $F\in \Delta$. If $\k[\Delta]$ is Cohen-Macaulay, then $\k[\Delta]$ is free over $\k[\theta_1,\ldots,\theta_d]$ and so the Hilbert-Poincar\'e series of $\k[\Delta]/(\theta_1,\ldots,\theta_d)$ is equal to $(1-t)^dP(\k[\Delta],t)=h_0+h_1t+\cdots+h_dt^d$. 
\end{proof}



\section{$\RR(V^*,k)$ and the Steinberg module} \label{R-Steinberg}

Let $p$ be a prime and let $\F_q$ denote a finite field of $q=p^s$ elements. 
Let $V$ be an $n$-dimensional vector space over $\F_q$. Given a linear vector space $W$, denote by $\Line_W$ the set of lines (i.e. $1$-dimensional linear subspaces) in $W$ and by $\Hpl_W$ the set of hyperplanes (i.e. $1$-codimensional linear subspaces) in $W$.
A set of lines $\{\ell_1,\ldots,\ell_m\}$ in $W$ is called an \textit{$m$-frame} of $W$ if $\{\ell_1,\ldots,\ell_m\}$ linearly spans an $m$-dimensional subspace of $W$.

\begin{definition} Let $\Delta(V^*)$ be the simplicial complex on the vertex set $\Line:=\Line_{V^*}$ in which a face is of the form $\Line\setminus F$ where $F$ is a set of lines which linearly spans $V^*$. More generally, for each $k\ge 1$, let $\Delta(V^*,k)$ be the simplicial complex on  $\Line\sqcup \cdots \sqcup \Line$ (the disjoint union of $k$ copies of $\Line$) in which a face is of the form $(\Line\setminus F_1)\sqcup\cdots \sqcup (\Line\setminus F_k)$ where the union $F_1\cup \cdots \cup F_k$ linearly spans $V^*$. 
\end{definition}

It is straightforward to check that a facet of $\Delta(V^*)$ is of the form $\Line\setminus F$ where $F$ is an $n$-frame of $V^*$ and a minimal non-face of $\Delta(V^*)$ is of the form $\Line\setminus \Line_H$ where $H$ is a hyperplane of $V^*$.
Similarly a facet of $\Delta(V^*,k)$ is of the form $(\Line\setminus F_1)\sqcup\cdots \sqcup (\Line\setminus F_k)$ where $F_1,\ldots,F_k$ form a partition of a frame of $V^*$ and a minimal non-face of $\Delta(V^*,k)$ is of the form $(\Line\setminus \Line_H) \sqcup\cdots \sqcup (\Line\setminus \Line_H)$ where $H$ is a hyperplane of $V^*$. The cardinality of a facet of $\Delta(V^*,k)$ is $$d:=k(q^n-1)/(q-1)-n,$$ and so $\Delta(V^*,k)$ is a $(d-1)$-dimensional simplicial complex. 



\begin{lem}\label{Deltamatroid}
	$\Delta(V^*,k)$ is a matroid complex.
\end{lem}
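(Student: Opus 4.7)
My plan is to realize $\Delta(V^*,k)$ as the independence complex of an explicit matroid; the matroid axioms will then give the exchange property for free.

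Fix once and for all a nonzero representative $u_\ell\in V^*$ on each line $\ell$. Let $E=\Line\sqcup\cdots\sqcup\Line$ denote the vertex set of $\Delta(V^*,k)$, and consider the map $E\to V^*$ sending every copy of $\ell$ to $u_\ell$. This is a vector configuration in $V^*$ and so defines the vector matroid $M_k$ on $E$, whose rank function is
\[
\rank_{M_k}(A_1\sqcup\cdots\sqcup A_k)=\dim_{\F_q}\sp(A_1\cup\cdots\cup A_k),
\]
where the union is taken inside $\Line$. In particular $M_k$ has rank $n$, and its spanning sets are precisely the subsets $F_1\sqcup\cdots\sqcup F_k$ such that $F_1\cup\cdots\cup F_k$ linearly spans $V^*$.

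Next I would pass to the dual matroid $M_k^*$; the fact that the dual of a matroid is again a matroid is standard (cf.\ Oxley \cite{Oxley}). By the definition of matroid duality, a subset of $E$ is independent in $M_k^*$ if and only if its complement is spanning in $M_k$. Writing such an independent set as $(\Line\setminus F_1)\sqcup\cdots\sqcup(\Line\setminus F_k)$, the spanning condition on its complement $F_1\sqcup\cdots\sqcup F_k$ is precisely the condition that $F_1\cup\cdots\cup F_k$ spans $V^*$, which is the defining condition for $(\Line\setminus F_1)\sqcup\cdots\sqcup(\Line\setminus F_k)$ to be a face of $\Delta(V^*,k)$. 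Hence the faces of $\Delta(V^*,k)$ coincide with the independent sets of $M_k^*$, proving that $\Delta(V^*,k)$ is a matroid complex.

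I do not expect any substantive obstacle: the proof only combines the vector-matroid construction with matroid duality, both recorded in Oxley's book already cited. A direct verification of the exchange property is also feasible but more cumbersome, as one has to distinguish the cases in which the line to be exchanged already appears in another copy $F_j$ or not; recasting the problem through the dual matroid bypasses this case analysis entirely.
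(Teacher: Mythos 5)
Your proof is correct and follows essentially the same route as the paper: both realize $\Delta(V^*,k)$ as the dual of the matroid whose independent sets are disjoint families of lines forming a frame (the paper's complex $\nabla(V^*,k)$, which is exactly the independence complex of your vector matroid $M_k$ with $k$ parallel copies of each $u_\ell$), and then invoke matroid duality. The only difference is cosmetic: the paper checks the exchange property for $\nabla(V^*,k)$ directly, whereas you get it for free from representability of $M_k$.
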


\begin{proof} Recall that the \textit{dual} of a matroid complex $\Delta$ on $S$ is defined to be the simplicial complex $\Delta^{\#}$ on $S$ such that $F$ is a facet of $\Delta^\#$ if $S\setminus F$ is a facet of $\Delta$. It is known that the dual of a matroid complex is also a matroid complex \cite[Theorem 2.1.1]{Oxley}. 
	
Let $\nabla(V^*,k)$ denote the simplicial complex on $\Line\sqcup \cdots \sqcup \Line$ in which $F_1\sqcup \cdots \sqcup F_k$ is a face if $F_1,\ldots,F_k$ form a partition of a frame of $V^*$. The exchange property is easily checked for $\nabla(V^*,k)$ and so it is a matroid complex. Since a facet of $\nabla(V^*,k)$ is of the form $F_1\sqcup \cdots \sqcup F_k$ where $F_1,\ldots,F_k$ form a partition of an $n$-frame of $V^*$, it follows that the dual of the matroid complex $\nabla(V^*,k)$ is $\Delta(V^*,k)$, and so $\Delta(V^*,k)$ is also a matroid complex. 
\end{proof}


The space
$\F_q\langle \Line\sqcup \cdots \sqcup \Line\rangle$ is the space of formal sums $\sum_{1\le i\le k, \ \ell\in \Line}\alpha_{\ell,i} Y_{\ell,i}$ with $\alpha_{\ell,i}\in \F_q$, where $Y_{\ell,i}$ corresponds to $\ell\in \Line$ with $\Line$ being at the $i$-th position in the disjoint union $\Line\sqcup \cdots \sqcup \Line$.
For each line $\ell$ in $V^*$, choose a nonzero linear form $u_\ell$ on it. Such a choice gives rise to a homomorphism 
$\iota:V \to\F_q\langle  \Line\sqcup \cdots \sqcup \Line \rangle$ defined by 
$$\iota(v)=\sum_{1\le i\le k,\ \ell\in \Line} u_\ell(v)Y_{\ell,i}.$$

\begin{lem} \label{iota-property}
	The map $\iota$ is a monomorphism satisfying the property (*) and its dual $\iota^*:\F_q^{\Line\sqcup \cdots \sqcup \Line} \to V^*$ sends each basis element $X_{\ell,i}$ to $u_\ell$. 
\end{lem}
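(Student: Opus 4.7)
The plan is to verify the three assertions in a natural order: first the identification of $\iota^*$ on degree-one generators, then injectivity of $\iota$, and finally property (*), which is the only statement requiring the combinatorics of $\Delta(V^*,k)$.

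The description of $\iota^*$ is the most mechanical part. By definition, $X_{\ell,i}\in \F_q^{\Line\sqcup\cdots\sqcup \Line}$ is the linear functional that extracts the $Y_{\ell,i}$-coefficient of an element of $\F_q\langle \Line\sqcup\cdots\sqcup \Line\rangle$. Applying this to $\iota(v)=\sum_{j,\ell'} u_{\ell'}(v)Y_{\ell',j}$ picks out the coefficient $u_\ell(v)$, so $\iota^*(X_{\ell,i})(v)=u_\ell(v)$ for every $v\in V$; hence $\iota^*(X_{\ell,i})=u_\ell\in V^*$, as claimed.

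For injectivity, note that every nonzero linear form on $V$ is a nonzero scalar multiple of some $u_\ell$, so the collection $\{u_\ell\}_{\ell\in\Line}$ spans $V^*$. If $\iota(v)=0$, then $u_\ell(v)=0$ for every $\ell\in\Line$, whence $v$ is annihilated by all of $V^*$ and therefore $v=0$.

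The core of the lemma is property (*). A face $F$ of $\Delta(V^*,k)$ has, by definition, the form $F=(\Line\setminus F_1)\sqcup\cdots\sqcup (\Line\setminus F_k)$ with $F_1\cup\cdots\cup F_k$ linearly spanning $V^*$. Its complement in $\Line\sqcup\cdots\sqcup\Line$ is $F_1\sqcup\cdots\sqcup F_k$ (position-aware), so a vector $w\in \F_q\langle \Line\sqcup\cdots\sqcup \Line\rangle$ belongs to $\F_q\langle F\rangle$ precisely when its $Y_{\ell,i}$-coefficient vanishes for every $i$ and every $\ell\in F_i$. Applied to $\iota(v)=\sum_{i,\ell}u_\ell(v)Y_{\ell,i}$, this condition reads $u_\ell(v)=0$ for all $\ell\in F_1\cup\cdots\cup F_k$. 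Since $F_1\cup\cdots\cup F_k$ spans $V^*$ and each $u_\ell$ is a nonzero element of the corresponding line $\ell$, the set $\{u_\ell:\ell\in F_1\cup\cdots\cup F_k\}$ spans $V^*$, so $v=0$. Hence $\iota(V)\cap \F_q\langle F\rangle=0$.

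There is no real obstacle; the only care needed is in bookkeeping the positions inside the disjoint union $\Line\sqcup\cdots\sqcup\Line$, i.e.\ in observing that the condition to lie in $\F_q\langle F\rangle$ is indexed by pairs $(\ell,i)$ rather than by lines alone, but this reduces immediately to $u_\ell(v)=0$ over the set-theoretic union $F_1\cup\cdots\cup F_k$, whose spanning property is built into the definition of $\Delta(V^*,k)$.
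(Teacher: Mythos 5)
Your proof is correct and follows essentially the same route as the paper: the computation $\iota^*(X_{\ell,i})(v)=u_\ell(v)$, and for property (*) the observation that membership of $\iota(v)$ in $\F_q\langle F\rangle$ forces $u_\ell(v)=0$ for all $\ell\in F_1\cup\cdots\cup F_k$, which spans $V^*$. Your explicit argument for injectivity (the $u_\ell$ span $V^*$) is a detail the paper simply calls clear, so nothing differs in substance.
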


\begin{proof}That $\iota$ is a monomorphism is clear. Given $(\Line\setminus F_1)\sqcup\cdots \sqcup (\Line\setminus F_k)$ a face of $\Delta(V^*,k)$, if $\iota(v)=\sum_{1\le i\le k,\ \ell\in \Line} u_\ell(v)Y_{\ell,i}$ belongs to $\F_q\langle (\Line\setminus F_1)\sqcup\cdots \sqcup (\Line\setminus F_k)  \rangle$, then we must have $u_\ell(v)=0$ for all $\ell\in F_1\cup \cdots \cup F_k$, which implies that $v=0$ since $F_1\cup \cdots \cup F_k$ linearly spans $V^*$. For the second assertion, we have   $$\iota^*(X_{\ell,i})(v)=X_{\ell,i}(\iota(v))=X_{\ell,i}\bigg(\sum_{1\le i\le k,\ \ell\in \Line} u_\ell(v)Y_{\ell,i}\bigg)=u_\ell(v)$$
	for all $v\in V$. 
\end{proof}

%
%
%
%
%

We now prove the following which is the first part of Theorem \ref{main1}.
\begin{prop} 
	$\RR(V^*,k)$ is trivial in degree greater than $d$, and in this degree, it is isomorphic to the twisted Steinberg module $\St_n\otimes \Det^{k-1}$.
\end{prop}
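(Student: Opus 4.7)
My plan is to apply Proposition~\ref{lsop-dual} to the monomorphism $\iota$ and the matroid complex $\Delta(V^*,k)$, then to identify the top-degree piece as a $\GL_n$-module via a homotopy equivalence of the Alexander dual with the Tits building.

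First I would verify that $\iota^*(I_{\Delta(V^*,k)}) = I(V^*,k)$: the minimal non-faces of $\Delta(V^*,k)$ are the disjoint unions $(\Line \setminus \Line_H) \sqcup \cdots \sqcup (\Line \setminus \Line_H)$ indexed by hyperplanes $H$ of $V^*$, and by Lemma~\ref{iota-property} the corresponding monomial pulls back under $\iota^*$ to $\prod_{\ell \not\subset H} u_\ell^k = \V_{H,V^*}^k$, which are precisely the generators of $I(V^*,k)$. Combining Lemmas~\ref{Deltamatroid} and \ref{iota-property}, Proposition~\ref{lsop-dual} then yields that $\RR(V^*,k) \cong \Sym^*(V^*)/\iota^*(I_{\Delta(V^*,k)})$ has Hilbert series $h_0 + h_1 t + \cdots + h_d t^d$, giving the triviality above degree $d$, and that $\RR^d(V^*,k)$ is spanned by $\{\iota^*(X^F)\}$ over facets $F$. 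A direct computation shows that the facet $(\Line \setminus F_1) \sqcup \cdots \sqcup (\Line \setminus F_k)$ pulls back to
\[
\iota^*(X^F) = \prod_{i=1}^{k} \prod_{\ell \notin F_i} u_\ell = \frac{\L_{V^*}^k}{\prod_{\ell \in F} u_\ell} =: e_F,
\]
which depends only on the underlying $n$-frame $F = \bigsqcup_i F_i$, so $\RR^d(V^*,k)$ is spanned by $\{e_F\}_F$ indexed by $n$-frames of $V^*$.

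Next I would reduce the $\GL_n$-module identification to the case $k=1$ via the Dickson polynomial. Since $\L_{V^*} = \V_{H,V^*} \cdot \L_H$, one has $\L_{V^*}^{k-1} \cdot I(V^*,1) \subset I(V^*,k)$, so multiplication by $\L_{V^*}^{k-1}$ descends to a well-defined map $\RR(V^*,1) \to \RR(V^*,k)$ of degree $(k-1)(q^n-1)/(q-1)$. As $\L_{V^*}$ is Dickson's polynomial $\L_n$ (up to scalar) and thus transforms under $\GL_n$ by the character $\Det$, this map is $\GL_n$-equivariant after twisting the source by $\Det^{k-1}$. On top degrees it sends $\L_{V^*}/\prod_{\ell \in F}u_\ell \mapsto e_F$ and is therefore surjective; both sides have the same dimension $h_d$, since the Alexander duals $\Delta(V^*,k)^*$ all share one homotopy type (via the nerve theorem applied to the covering by the sub-simplices $\Line_H \sqcup \cdots \sqcup \Line_H$, whose intersection pattern does not depend on $k$). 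Hence the map yields a $\GL_n$-equivariant isomorphism $\RR^{d_0}(V^*,1) \otimes \Det^{k-1} \cong \RR^d(V^*,k)$, where $d_0 = (q^n-1)/(q-1)-n$, reducing the problem to proving $\RR^{d_0}(V^*,1) \cong \St_n$ as $\GL_n$-modules.

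For this last identification I would show that the nerve of the covering $\{2^{\Line_H}\}_H$ of $\Delta(V^*,1)^* = \bigcup_H 2^{\Line_H}$ is $\GL_n$-equivariantly homotopy equivalent to the Tits building $\Tits_n$ of $\GL_n$. The Solomon--Tits theorem then identifies $\tilde H^{n-2}(\Tits_n; \F_q)$ with the Steinberg module, and combining with Alexander duality and the classical fact that the top-degree Artinian reduction of a Cohen--Macaulay Stanley--Reisner ring is canonically $G$-equivariantly isomorphic to its top reduced homology, one obtains $\RR^{d_0}(V^*,1) \cong \St_n$. The principal obstacle is tracking the $\GL_n$-equivariance carefully through each of these identifications; nevertheless, since $\RR^{d_0}(V^*,1)$ has the correct dimension $q^{n(n-1)/2} = \dim \St_n$ and since $\St_n$ is the unique absolutely irreducible $\GL_n$-module of that dimension up to $\Det^i$-twist, the residual ambiguity is only a determinant character, which can be resolved by computing the action of the split torus on the representative $e_{F_0}$ for the standard frame.
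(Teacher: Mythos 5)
Your first half follows the paper's own route: identifying $\iota^*(I_{\Delta(V^*,k)})=I(V^*,k)$, invoking Proposition \ref{lsop-dual} via Lemmas \ref{Deltamatroid} and \ref{iota-property} to get vanishing above degree $d$ and the spanning set $\{\iota^*(X^F)\}$ indexed by frames, and computing $h_d$ from the Alexander dual, which is homotopy equivalent to the Tits building (the paper uses the Quillen fiber lemma on the face poset, you use the nerve theorem on the cover by the simplices $\Line_H\sqcup\cdots\sqcup\Line_H$; either works, and your reduction to $k=1$ by multiplying by $\L_{V^*}^{k-1}$, which correctly produces the $\Det^{k-1}$ twist, is also sound). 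The genuine gap is in the last step, the identification of the top-degree piece with the Steinberg module as a $\GL_n$-module. The ``classical fact'' you appeal to --- that the top-degree Artinian reduction of a Cohen--Macaulay Stanley--Reisner ring is \emph{canonically $G$-equivariantly} isomorphic to its top reduced homology --- is not available in the form you need. Such equivariant statements require a $G$-stable linear system of parameters, carry a determinant twist coming from the $G$-action on the span of the parameters, and are typically established at the level of (Brauer) characters; here the coefficient field is $\F_q$ and $p$ divides $|\GL_n(\F_q)|$, so a character-level identity does not determine the module up to isomorphism. Moreover the $\GL_n$-action on the variables $X_{\ell,i}$ that makes $\iota^*$ equivariant is only monomial (each $u_\ell$ is sent to a scalar multiple of some $u_{\ell'}$), not a permutation action, so even quoting an equivariant result for simplicial group actions requires extra care with exactly the kind of determinant twist you are trying to pin down.

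Your fallback does not repair this: knowing $\dim_{\F_q}\RR^{d_0}(V^*,1)=q^{n(n-1)/2}$ does not imply the module is irreducible --- in modular characteristic it could a priori be an extension or direct sum of smaller composition factors --- so you cannot invoke uniqueness of the absolutely irreducible module of that dimension up to a $\Det^i$-twist, and a torus computation on one vector only resolves the twist \emph{after} irreducibility is known. The paper closes precisely this gap with an explicit representation-theoretic input: the generator $\gamma=\L_n^k/(x_1\cdots x_n)$ is fixed by the twisted Steinberg idempotent $\st_n^{(k-1)}$ (\cite[Corollary A.7]{Mitchell85}), so $\st_n^{(k-1)}\cdot g\mapsto \gamma\cdot g$ gives a $\GL_n$-epimorphism $\St_n\otimes\Det^{k-1}\twoheadrightarrow \RR^d(V^*,k)$ (using transitivity of $\GL_n$ on $n$-frames for surjectivity), and only then does the dimension count $h_d=q^{\frac{n(n-1)}{2}}$, obtained from the Tits building, force this surjection to be an isomorphism. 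Some substitute for this idempotent argument (or another a priori map to or from $\St_n\otimes\Det^{k-1}$) is the missing ingredient in your proposal.
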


\begin{proof} Recall that a minimal non-face of $\Delta(V^*,k)$ is of the form $(\Line\setminus \Line_H) \sqcup\cdots \sqcup (\Line\setminus \Line_H)$ where $H$ is a hyperplane of $V^*$. The generator of the Stanley-Reisner ideal $I_{\Delta(V^*,k)}$ of $\F_q[X_{\ell,i}\mid  \ell\in\Line, \ 1\le i\le k]$ corresponding to such a minimal non-face is $\prod_{1\le i\le k,\ \ell\in \Line \setminus \Line_H} X_{\ell,i}$. By Lemma \ref{iota-property}, this is sent by $\iota^*$ to $\big(\prod_{\ell\in \Line \setminus \Line_H} u_\ell\big)^k$ in $\Sym^*(V^*)$. It follows that the ideal $\iota^*(I_{\Delta(V^*,k)})$ of $\Sym^*(V^*)$ is generated by the polynomials $\big(\prod_{\ell\in \Line \setminus \Line_H} u_\ell\big)^k$ where $H$ runs over the set of hyperplanes of $V^*$. The quotient ring $\Sym^*(V^*)/\iota^*(I_{\Delta(V^*,k)})$ is thus exactly the ring $\RR(V^*,k)$ defined in the introduction. 
	
Since $\Delta(V^*,k)$ is a matroid complex (Lemma \ref{Deltamatroid}) and $\iota$ satisfies the property (*) (Lemma \ref{iota-property}), we can apply Proposition \ref{lsop-dual} to analyze the top-degree module $\RR(V^*,k)^d$. Given a facet of $\Delta(V^*,k)$ 
of the form $F:=(\Line\setminus F_1)\sqcup\cdots \sqcup (\Line\setminus F_k)$ where $F_1,\ldots,F_k$ form a partition of an $n$-frame $\{\ell_1,\ldots,\ell_n\}$ of $V^*$, the corresponding generator $\iota^*(X_F)$ of $\RR(V^*,k)^d$ is given by $$\iota^*(X_F)=\frac{\big(\prod_{\ell\in \Line} u_\ell\big)^k}{u_{\ell_1}\cdots u_{\ell_n} }.$$ It is well-known that the ``product of lines" $\prod_{\ell\in  \Line} u_\ell$ is up to a nonzero scalar equal to $\L_n$ where $$\L_n=\L(x_1,\ldots,x_n):=\det(x_j^{q^{i-1}})_{1\le i,j\le n}.$$ Since $\GL_n$ acts transitively on the set of $n$-frames of $V^*$, it follows that if we put 
$\gamma:=\frac{\L_n^k}{x_1\cdots x_n}$ then by Proposition \ref{lsop-dual}, $\{\gamma\cdot g\mid g\in \GL_n \}$ is a spanning set of $\RR(V^*,k)^d$. But it is known that (\cite[Corollary A.7]{Mitchell85}) $\gamma$ is fixed by $\st_n^{(k-1)}$, where $\st_n^{(k-1)}$ is the idempotent corresponding to the twisted Steinberg representation $\St_n\otimes \det^{k-1}$.  
It follows that the $\GL_n$-linear map $$\St_n\otimes {\Det}^{k-1}\cong \st_n^{(k-1)}\cdot \F_q[\GL_n]\to \RR(V^*,k)^d,\quad \st_n^{(k-1)}\cdot g\mapsto \gamma\cdot g,$$ is an epimorphism. 
To conclude that this is an isomorphism, we need to show that the dimension of $\RR(V^*,k)^d$ is $q^{\frac{n(n-1)}{2}}$. For this we use the formula $h_d=(-1)^n\tilde{\chi}(\Delta(V^*,k)^*)$ and proceed as follows. 
	Recall \cite{Wachs} first that to every poset (partially ordered sets) $P$, one can associate a simplicial complex $c(P)$, called the \textit{order complex} of $P$, whose faces are the chains  (i.e. totally ordered subsets) of $P$. The (reduced) homology of $P$ is defined to be the (reduced) homology of $c(P)$. Inversely, to every simplicial complex $\Delta$, one can associate a poset $p(\Delta)$, called the \textit{face poset} of $\Delta$, which is defined to be the poset of nonempty faces ordered by inclusion. It is known that $c(p(\Delta))$, called the \textit{barycentric
		subdivision} of $\Delta$, and $\Delta$ have the same geometric realizations. 
	Now let $\Tits$ denote the Tits building which is defined by   
	$$\Tits:=\text{partially ordered sets (poset) of proper subspaces of $V^*$}.$$
	It is well-known that $\Ht_*(\Tits;\F_q)$ is only nontrivial in degree $n-2$, and in this degree, $\Ht_{n-2}(\Tits;\F_q)\cong \St_n$ (Solomon and Tits). Note that the dimension of $\Ht_{n-2}(\Tits;\F_q)$, which is $q^{\frac{n(n-1)}{2}}$, can be computed by induction using an exact sequence as in \cite[Theorem 1.14]{Lusztig74}.  	
	The Alexander dual $\Delta^*:=\Delta(V^*,k)^*$ is related to the Tits building as follows. 
	By definition, $\Delta^*$ is the simplicial complex on $\Line\sqcup \cdots \sqcup \Line$ in which a face is of the form $F_1\sqcup\cdots \sqcup F_k$ where the union $F_1\cup \cdots \cup F_k$ does not linearly spans $V^*$. Consider the map of posets $f:p(\Delta^*)\to \Tits$ which sends a face $F_1\sqcup \cdots \sqcup F_k$ to the subspace spanned by $F_1\cup \cdots \cup F_k$. For each proper subspace $W$ of $V^*$, the poset $f^{-1}(\Tits_{\le W})$ is contractible because it has $\Line_W \sqcup \cdots \sqcup \Line_W$ as its unique maximal element. The Quillen Fiber Lemma \cite{Quillen-FiberLemma} then implies that $f$ is a homotopy equivalence. The identity $h_d=q^{\frac{q(q-1)}{2}}$ follows. 	
\end{proof}

\section{$\RR(V^*,k)$ as a $\P$-module} \label{R-Steenrod}

In this section we prove the second part of Theorem \ref{main1}. 
For this we first review some facts about the action of the Steenrod algebra $\P$ on $\Sym^*(V^*)\cong \F_q[x_1,\ldots,x_n]$. Larry Smith \cite{LarrySmith} defined the Steenrod algebra $\P:=\P(\F_q)$ as the $\F_q$-subalgebra of the endomorphism algebra of the functor $V\mapsto \Sym^*(V^*)$, generated by certains natural transformations $1=\P^0,\P^1,\P^2,\ldots$. It is sufficient for us to know that the action of the operations $\P^i$ on $\Sym^*(V^*)$ satisfies: 
\begin{itemize}
	\item \textit{the unstable condition}: $\P^i(f)=f^q$ if $i=\deg f$ and $\P^i(f)=0$ if $i>\deg f$, $\forall f\in \Sym^*(V^*)$, and
	\item \textit{the Cartan formula}: $\P^k(fg)=\sum_{i+j=k} \P^i(f)\P^j(g), \forall f,g\in \Sym^*(V^*).$ 
\end{itemize}
Denote by $\bP$ the total Steenrod operation $1+\P^1+\P^2+\cdots$. These properties then implies that the map $$\bP: \F_q[x_1,\ldots,x_n]\to \F_q[x_1,\ldots,x_n], \quad f\mapsto \sum_{i\ge 0}\P^i(f),$$ is a homomorphism of algebras which satisfies $\bP(x)=x+x^q$ for all linear form $x$. 

The algebra $\P$ is also a Hopf algebra with the coproduct $\Delta:\P\to \P\otimes\P$ defined by $$\Delta(\P^m)=\sum_{i+j=m} \P^i\otimes \P^j,\quad m\ge 1.$$ The canonical anti-automorphism (the antipode) $\chi:\P\to \P$ then satisfies the relations: 
\begin{equation*}\label{chi}
\sum_{i+j=m} \P^i\chi(\P^j)=\sum_{i+j=m} \chi(\P^i) \P^j=0, \quad m\ge 1.
\end{equation*}
This can be used to show that $$\chi(\P^i)(x)=\begin{cases}
(-1)^i x^{q^r} & \text{if $i=\frac{q^r-1}{q-1}$,}\\ 0 & \text{otherwise.} 
\end{cases}$$ 
Since $\chi$ is a map of coalgebras, the Cartan formula also holds for $\chi(\P^k)$:
$$\chi(\P^k)(f\cdot g)=\sum_{i+j=k} \chi(\P^i)(f)\cdot \chi(\P^j)(g), \quad \forall f,g\in \Sym^*(V^*).$$
In order to simplify signs, we write $\Ph^i$ for $(-1)^i\chi(\P^i)$ and let $\Ph=1+\Ph^1+\Ph^2+\cdots$ denote the (signed) total conjugate Steenrod operation. The above formulae for $\chi$ then implies that the map $$\Ph:\F_q[x_1,\ldots,x_n]\to \F_q[[x_1,\ldots,x_n]], \quad f\mapsto \sum_{i\ge 0} \Ph^i(f),$$ is a homomorphism of algebras which satisfies $\Ph(x)=x+x^q+x^{q^2}+\cdots$ for all linear form $x$. Here $\F_q[[x_1,\ldots,x_n]]$ denotes the $\F_q$-algebra of power series in $x_1,\ldots,x_n$. 

Finally, for all $k\ge 1$ and $f,g\in \Sym^*(V^*)$, we recall the following congruence
 $$\P^k(f)\cdot g\equiv f\cdot \chi(\P^k)(g)\pmod {\P^+\Sym^*(V^*)},$$
which is known as the the $\chi$-trick \cite{Wood89}. This follows from the identity
\begin{eqnarray*}
\sum_{i=0}^k\P^i\big(f\cdot \chi(\P^{k-i}(g))&=& \sum_{i=0}^k\sum_{j=0}^{i}\P^j(f)\cdot \P^{i-j}(\chi(\P^{k-i}(g))\\
&=&\sum_{j=0}^{k} \bigg(\P^j(f)\cdot \sum_{a+b=k-j} \P^a(\chi(\P^b)(g))\bigg)\\
&=& \P^k(f)\cdot g.
\end{eqnarray*}


We now prove Theorem \ref{main1} (2.a). We suppose $k=q^sr$ with $s\ge 0$, $1\le r\le q-1$ and as in the previous section, we put  $d:=\frac{k(q^n-1)}{q-1}-n$.

\begin{prop}[Theorem \ref{main1} (2.a)] \label{Rmod} For each $0\le i\le d$, the natural projection
	$\Sym^*(V^*)\twoheadrightarrow \RR(V^*,k)$ induces an isomorphism of $\GL_n$-modules $\Quot^i(\Sym^*(V^*))\cong \Quot^i(\RR(V^*,k))$.  
\end{prop}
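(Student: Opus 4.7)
The kernel of the projection $\Sym^*(V^*)\twoheadrightarrow \RR(V^*,k)$ is the ideal $I(V^*,k)$, so Proposition~\ref{Rmod} amounts to showing that every element of $I(V^*,k)$ of degree at most $d$ lies in $\P^+\Sym^*(V^*)$. Because $I(V^*,k)$ is generated as an ideal by the $\GL_n$-orbit of $\V_n^k$, and the submodule $\P^+\Sym^*(V^*)$ of hit elements is $\GL_n$-stable (Steenrod operations commute with linear substitutions of variables), it suffices to prove that $f\cdot \V_n^k\in \P^+\Sym^*(V^*)$ for every $f\in\Sym^*(V^*)$ whose degree satisfies $\deg f\le d-\deg\V_n^k = k(q^{n-1}-1)/(q-1)-n.$

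The plan is to first establish a ``conjugate expansion'' of $\V_n^k$ generalising Lemma~\ref{Vn}, namely
$$\V_n^k=\sum_\alpha \chi(\P^{b_\alpha})(h_\alpha),$$
where the exponents $b_\alpha$ all satisfy $b_\alpha > k(q^{n-1}-1)/(q-1)-n$. For $k=1$ this is exactly Lemma~\ref{Vn}, with the minimum exponent $a_{n-1}=(q^{n-1}-1)/(q-1)-(n-1)$ being sharp relative to the bound. For $k=q^sr$ with $1\le r\le q-1$, one expects to derive such an identity by combining the Frobenius relation $\V_n^q=\P^{q^{n-1}}(\V_n)$, iterated $s$ times to realise $\V_n^{q^s}$ as an iterated $\P$-image, with the $r$-fold Cartan product of the $k=1$ identity, and then converting each resulting $\P$-piece into a conjugate piece via the antipode relations $\sum_{i+j=m}\chi(\P^i)\P^j=0$ for $m\ge 1$. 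The restriction $1\le r\le q-1$ is precisely what keeps the combinatorics in check, preventing carries in base $q$ from generating stray low-exponent contributions.

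Granted this expansion, the proposition follows by applying the $\chi$-trick term by term: for any $f$ with $\deg f\le k(q^{n-1}-1)/(q-1)-n$,
$$f\cdot \chi(\P^{b_\alpha})(h_\alpha)\equiv \P^{b_\alpha}(f)\cdot h_\alpha \pmod{\P^+\Sym^*(V^*)}.$$
Since $b_\alpha>\deg f$, the unstable condition forces $\P^{b_\alpha}(f)=0$, so each right-hand side vanishes and $f\cdot\V_n^k$ is hit. Via the $\GL_n$-reduction above this gives $I(V^*,k)\cap\Sym^i(V^*)\subseteq\P^+\Sym^*(V^*)$ for every $0\le i\le d$, and the induced map $\Quot^i(\Sym^*(V^*))\to\Quot^i(\RR(V^*,k))$ is an isomorphism; $\GL_n$-linearity is automatic from the $\GL_n$-equivariance of the underlying projection.

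The main obstacle is the first step, the conjugate expansion of $\V_n^k$ with exponents $b_\alpha$ all strictly larger than $k(q^{n-1}-1)/(q-1)-n$. The $k=1$ case is in hand as Lemma~\ref{Vn}; the extension to $k=q^sr$ with $1\le r\le q-1$ is where the arithmetic work lies, and once it is produced the conclusion is formal from the $\chi$-trick and the unstable condition.
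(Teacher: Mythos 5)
Your reduction to showing that $f\cdot \V_n^k$ is hit for $\deg f\le \frac{k(q^{n-1}-1)}{q-1}-n$, and the final step (conjugate expansion, then $\chi$-trick plus the unstable condition term by term), coincide exactly with the paper's proof of Proposition \ref{Rmod}. But the heart of the matter — the expansion of $\V_n^k$ as $\sum_\alpha \chi(\P^{b_\alpha})(h_\alpha)$ with every $b_\alpha$ strictly above the bound — is precisely what you leave unproved, and the route you sketch for it does not work. If you Frobenius-twist the $k=1$ identity (using $\chi(\P^{qi})(h^q)=(\chi(\P^i)(h))^q$, or equivalently iterating $\V_n^q=\P^{q^{n-1}}(\V_n)$), you get terms $\chi(\P^{q^s a_i})\big(e_i^{q^s}x_n^{q^s}\big)$ with $a_i=\frac{q^{n-1}-1}{q-1}-i$; the smallest exponent is $\frac{q^s(q^{n-1}-1)}{q-1}-q^s(n-1)$, which falls below the needed threshold $\frac{q^s(q^{n-1}-1)}{q-1}-n$ as soon as $q^s(n-1)\ge n$, i.e.\ in essentially all cases with $s\ge 1$. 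Likewise, converting a $\P$-piece into conjugate pieces via $\sum_{i+j=m}\chi(\P^i)\P^j=0$ produces terms $\chi(\P^i)(\P^{m-i}(\,\cdot\,))$ with $i$ as small as $1$, again below the threshold; and the alternative of moving $\P^{q^{n-1}}$ onto $f$ by the $\chi$-trick leaves you with $\V_n\cdot\chi(\P^{q^{n-1}})(f)$, whose second factor has degree far too large for the $k=1$ case to apply. So the "arithmetic work" you defer is not a routine extension of the $k=1$ identity: no formal combination of Frobenius, Cartan products and antipode relations applied to that identity yields exponents uniformly above the bound.

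What is actually needed (and what the paper proves) is a sharper identity in which the elementary symmetric functions are \emph{not} raised to the $q^s$-th power: for $k=q^sr$,
\begin{equation*}
\V(x_1,\ldots,x_{n-1},y_1)^{q^s}\cdots \V(x_1,\ldots,x_{n-1},y_r)^{q^s}
=(-1)^{n-1}\sum_{i=0}^{n-1}\chi\big(\P^{\frac{(q^{n-1}-1)q^sr}{q-1}-i}\big)\big(e_i\cdot y_1^{q^s}\cdots y_r^{q^s}\big),
\end{equation*}
whose exponents stay within $n-1$ of the top and hence beat the bound $\frac{k(q^{n-1}-1)}{q-1}-n$; specializing $y_1=\cdots=y_r=x_n$ gives the expansion of $\V_n^k$. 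This is the full statement of Lemma \ref{Vn} in the paper (the display in the introduction is only its $k=1$ case), and its proof is a genuinely separate computation: one applies the multiplicative total conjugate operation $\Ph$ to $\prod_{i=1}^{n-1}(1-x_i^{q-1})\cdot y_1^{q^s}\cdots y_r^{q^s}$, extracts the component of degree $q^{n-1+s}r$, and shows it vanishes whenever the $y_j$ lie in the span of $x_1,\ldots,x_{n-1}$ by a $q$-adic digit-sum estimate (using the hypothesis $1\le r\le q-1$, which enters exactly here). Until you supply an argument of this kind, your proposal has a genuine gap at its central step.
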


 We need the following lemma whose proof will be given below.
\begin{lem}\label{Vn} Given $r$ linear forms $y_1,\ldots,y_r$ in $V^*$, the following identity holds  $$\V(x_1,\ldots,x_{n-1},y_1)^{q^s}\cdots \V(x_1,\ldots,x_{n-1},y_r)^{q^s}	=(-1)^{n-1}\sum_{i=0}^{n-1}\chi(\P^{\frac{(q^{n-1}-1)q^sr}{q-1}-i})\big( e_i\cdot   y_1^{q^s}\cdots y_r^{q^s} \big),$$
	where $e_i:=e_i(x_1^{q-1},\ldots,x_{n-1}^{q-1})$ is the $i$-th elementary symmetric function of $x_1^{q-1},\ldots,x_{n-1}^{q-1}$.
\end{lem}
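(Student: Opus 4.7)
The plan is to rewrite the RHS as the degree-$rq^{n-1+s}$ component of an explicit formal power series, then compare coefficients with the LHS monomial-by-monomial in the $y_j$.

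Using $\chi(\P^c)=(-1)^c\Ph^c$ and the fact that $\Ph=\sum_c\Ph^c$ is a ring endomorphism satisfying $\Ph(x)=x+x^q+x^{q^2}+\cdots$ for each linear form $x$, combined with the generating-function identity $\sum_{i=0}^{n-1}(-1)^i e_i=\prod_{k=1}^{n-1}(1-x_k^{q-1})$, the RHS rewrites as
$$(-1)^{n-1+D}\Bigl[\prod_{k=1}^{n-1}\bigl(1-\Ph(x_k)^{q-1}\bigr)\cdot\prod_{j=1}^r\Ph(y_j)^{q^s}\Bigr]_{rq^{n-1+s}},$$
with $D=q^sr(q^{n-1}-1)/(q-1)$. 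The crucial simplification $1-\Ph(x)^{q-1}=x/\Ph(x)$ follows from $\Ph(x)^q=\Ph(x)-x$, an immediate consequence of the Frobenius identity $(x+x^q+x^{q^2}+\cdots)^q=x^q+x^{q^2}+\cdots$ in characteristic $p$.

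Treating $y_1,\ldots,y_r$ as independent variables (the identity then specializes to arbitrary linear forms in $V^*$), I would expand the LHS via the Dickson-type decomposition $\V(x_1,\ldots,x_{n-1},y)=\sum_{k=0}^{n-1}(-1)^{n-1-k}c_k\,y^{q^k}$, where $c_k$ denotes the $k$-th Dickson invariant of $\GL_{n-1}$ acting on $\sp(x_1,\ldots,x_{n-1})$ with $c_{n-1}=1$; the coefficient of $\prod_j y_j^{q^{k_j+s}}$ in the LHS is then $(\pm)\prod_j c_{k_j}^{q^s}$ for an explicit sign. On the RHS, $\prod_j\Ph(y_j)^{q^s}=\sum_K\prod_j y_j^{q^{k_j+s}}$ in characteristic $p$, so the coefficient of the same monomial is $(\pm)\bigl[\prod_k x_k/\Ph(x_k)\bigr]_{q^s(rq^{n-1}-\sum_j q^{k_j})}$. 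The hypothesis $r\le q-1$ guarantees both sums run over tuples $K\in\{0,\ldots,n-1\}^r$, since any $k_j\ge n$ forces $\sum_j q^{k_j}>rq^{n-1}$, yielding negative $x$-degree.

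The main obstacle is then the family of purely-$x$ identities
$$\prod_{j=1}^r c_{k_j}^{q^s}=\pm\Bigl[\prod_{k=1}^{n-1}\frac{x_k}{\Ph(x_k)}\Bigr]_{q^s(rq^{n-1}-\sum_j q^{k_j})}\qquad\text{for each }K\in\{0,\ldots,n-1\}^r,$$
relating products of powers of Dickson invariants to degree-components of the formal power series $\prod_k x_k/\Ph(x_k)$. My approach would be induction on $n$, using M\`ui's recursion $\V_n(y)=\V_{n-1}(y)^q-\V_{n-1}(y)\V_{n-1}(x_{n-1})^{q-1}$: the extra factor $x_{n-1}/\Ph(x_{n-1})$ acquired by the formal power series when passing from $n-1$ to $n$ variables mirrors the recursive update of the Dickson invariants $c_{n-1,k}$ induced by M\`ui's formula, and careful tracking of the resulting shifts in degree-components closes the induction.
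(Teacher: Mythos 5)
Your opening reduction is correct and is exactly the paper's first step: using $\chi(\P^c)=(-1)^c\Ph^c$ and the multiplicativity of $\Ph$, the right-hand side is (up to an explicit sign) the homogeneous component of degree $rq^{n-1+s}$ of $\prod_{k=1}^{n-1}\bigl(1-\Ph(x_k)^{q-1}\bigr)\prod_{j=1}^{r}\Ph(y_j)^{q^s}$, and indeed $1-\Ph(x)^{q-1}=x/\Ph(x)$ because $\Ph(x)^q=\Ph(x)-x$. Treating $y_1,\ldots,y_r$ as independent indeterminates and specializing afterwards is legitimate (specialization along a linear map commutes with the Steenrod action), and your observation that $r\le q-1$ confines all exponents $k_j$ to $\{0,\ldots,n-1\}$ is correct and is precisely where the hypothesis on $r$ enters. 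The divergence, and the gap, lies in what you do next.

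Passing to coefficients of $\prod_j y_j^{q^{k_j+s}}$ does not reduce the difficulty: the family of purely-$x$ identities $\prod_j c_{k_j}^{q^s}=\pm\bigl[\prod_k x_k/\Ph(x_k)\bigr]_{q^s(rq^{n-1}-\sum_j q^{k_j})}$ is equivalent to the lemma itself (specialize every $y_j$ to $x_n$ and compare Dickson expansions), so the entire content of the proof now sits in your final paragraph, which is only a sketch, and the sketched induction does not close as described. The degree-$m$ component of $\prod_{k=1}^{n-1}x_k/\Ph(x_k)$ is the convolution $\sum_{a+b=m}\bigl[\prod_{k\le n-2}x_k/\Ph(x_k)\bigr]_a\cdot\bigl[x_{n-1}/\Ph(x_{n-1})\bigr]_b$ over \emph{all} splittings of $m$, whereas your inductive hypothesis identifies the $(n-2)$-variable components only in the special degrees $q^{s'}(r'q^{n-2}-\sum_j q^{k_j})$; you control neither the remaining components entering the convolution nor the coefficients of $x/\Ph(x)=1/(1+x^{q-1}+x^{q^2-1}+\cdots)$ in general degrees, so ``careful tracking of the shifts'' is not available without a substantially stronger inductive statement (in effect you would have to determine every component of $\prod_k x_k/\Ph(x_k)$). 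The signs you leave as $\pm$ throughout also genuinely matter for odd $p$ and are never pinned down. For comparison, the paper finishes at exactly the point you reached not by proving coefficient identities but by a short vanishing argument: using additivity in each $y_j$ and symmetry it reduces to showing that the degree-$rq^{n-1+s}$ component vanishes after the specialization $y_1=x_1$, and this is done by the $q$-adic digit-sum estimate, since any monomial of that degree occurring in the specialized series would force $\alpha(rq^{n-1+s}-1)\le(n-2+s)(q-1)+(r-1)$, contradicting $\alpha(rq^{n-1+s}-1)=(n-1+s)(q-1)+(r-1)$. Some such vanishing or normalization argument is the missing ingredient in your proposal; as written, the crucial step is asserted rather than proved.
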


\begin{proof}[Proof of Proposition \ref{Rmod}]
	It is sufficient to prove that $f\cdot\V_n^k$ is $\P$-decomposable in $\Sym^*(V^*)$ if $$\deg(f)\le d-\deg(\V_n^k)=d-kq^{n-1}=\frac{k(q^{n-1}-1)}{q-1}-n.$$ By Lemma \ref{Vn} and $\chi$-trick, we have 
	\begin{eqnarray*}
		f\cdot \V_n^k &=& (-1)^{n-1}\sum_{i=0}^{n-1}f\cdot \chi(\P^{\frac{(q^{n-1}-1)k}{q-1}-i})\big( e_i\cdot  x_n^k \big) \\
		&\equiv& (-1)^{n-1}\sum_{i=0}^{n-1} e_i\cdot  x_n^k \cdot \P^{\frac{(q^{n-1}-1)k}{q-1}-i}(f) \pmod {\Pcal^+\Sym^*(V^*)}.
	\end{eqnarray*}
By instability, $\P^{\frac{(q^{n-1}-1)k}{q-1}-i}(f)=0$ for all $0\le i\le n-1$. 
\end{proof}

We prove Theorem \ref{main1} (2.b) by proving the following:

\begin{prop} \label{emb}
	\begin{enumerate}
		\item $\RR_{n,k}$ can be embedded as a $\P$-submodule into a direct product of $\frac{q^n-1}{q-1}$ copies of $\RR_{n-1,qk}\otimes \RR_{1,k}$.
		\item For all $u\in \RR_{n,k}$, we have $\chi(\P^i)(u)=0$ whenever $|u|+qi>d$. In particular, the top-degree module $\RR_{n,k}^{d}$ is $\P$-indecomposable.
	\end{enumerate}
	\end{prop}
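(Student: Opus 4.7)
For part (1), the plan is to construct for each line $\ell\subset V^*$ a $\P$-linear surjection $\pi_\ell:\RR_{n,k}\twoheadrightarrow \RR_{n-1,qk}\otimes\RR_{1,k}$, and then show that the product map $\pi=\prod_\ell \pi_\ell$ is injective. Fixing a hyperplane complement $H$ of $\ell$ gives $\Sym^*(V^*)\cong\Sym^*(H)\otimes\Sym^*(\ell)$; I set $J_\ell:=I(H,qk)\cdot\Sym^*(V^*)+(u_\ell^k)$, so that $\Sym^*(V^*)/J_\ell\cong\RR_{n-1,qk}\otimes\RR_{1,k}$. The critical step is verifying $I(V^*,k)\subseteq J_\ell$: for a generator $\V_{H',V^*}^k$ with $\ell\not\subset H'$, the form $u_\ell$ is one of the linear factors of $\V_{H',V^*}$, so $u_\ell^k\mid\V_{H',V^*}^k$; for $\ell\subset H'$, set $U=H'\cap H$, use the identity $\prod_{c\in\F_q}(u+c\,u_\ell)=u^q-u\,u_\ell^{q-1}$ to write $\V_{H',V^*}=\V_{U,H}^{q}+u_\ell^{q-1}R$, raise to the $q^s$-th power by Frobenius, and expand the outer $r$-th power to obtain
\[
\V_{H',V^*}^{k}=\V_{U,H}^{qk}+\sum_{i\ge 1}\tbinom{r}{i}\,\V_{U,H}^{qk-q^{s+1}i}\,u_\ell^{(q-1)q^s i}\,R^{q^s i}.
\]
The hypothesis $r\le q-1$ forces $(q-1)q^s i\ge k$ for every $i\ge 1$, so each cross-term lies in $(u_\ell^k)$ while the leading term lies in $I(H,qk)\cdot\Sym^*(V^*)$. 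The $\P$-stability of $J_\ell$ is routine given $\bP(u_\ell)=u_\ell+u_\ell^q$ and $\bP(\V_{U,H})=\V_{U,H}\prod(1+u_{\bar\ell}^{q-1})$, so $\pi_\ell$ is $\P$-linear.

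For injectivity of $\pi$: since $\RR_{n,k}^{d}\cong\St_n\otimes\Det^{k-1}$ is absolutely irreducible as a $\GL_n$-module and the restriction $\pi|_{\RR_{n,k}^{d}}$ is $\GL_n$-equivariant (the group permutes the lines of $V^*$), it suffices to exhibit some $\ell_0$ with $\pi_{\ell_0}(\gamma)\ne 0$ for the canonical generator $\gamma=\L_n^k/(x_1\cdots x_n)$, which is a direct calculation in $R_{\ell_0}$. For $u\in\RR_{n,k}$ of arbitrary degree, one uses that $\RR_{n,k}$ is a quotient of the Cohen--Macaulay face ring $\F_q[\Delta(V^*,k)]$ by a l.s.o.p.\ to find $v$ with $0\ne uv\in\RR_{n,k}^d$, and deduces $\pi(u)\ne 0$ from $\pi(u)\pi(v)=\pi(uv)\ne 0$.

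For part (2), I argue by induction on $n$ using the embedding from (1). The base case $n=1$ amounts to showing, in $\F_q[y]/(y^k)$, that $\chi(\P^i)(y^m)=0$ whenever $m+qi\ge k$, and that $y^{k-1}$ is not hit. From $\bP\cdot\chi(\bP)=1$ one gets $\chi(\bP)(y)=y-y^q+y^{q^2}-\cdots$, so $\chi(\P^i)(y^m)$ is the degree $m+(q-1)i$ coefficient of $(y-y^q+y^{q^2}-\cdots)^m$; a multinomial calculation with Lucas' theorem in base $p$ yields both vanishing claims for $k=q^s r$, $1\le r\le q-1$. For the inductive step of (2.a), apply the embedding, write $u\in\RR_{n-1,qk}\otimes\RR_{1,k}$ as $\sum_\alpha v_\alpha\otimes w_\alpha$ with pure bidegrees, and use Cartan for $\chi(\P^i)=\sum_{a+b=i}\chi(\P^a)\otimes\chi(\P^b)$: any nonzero summand would force $|v_\alpha|+qa\le d_{n-1,qk}$ and $|w_\alpha|+qb\le k-1$ by the inductive hypothesis, hence $|u|+qi\le d$, contradicting the assumption $|u|+qi>d$. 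For the inductive step of indecomposability, use the tensor lemma $\P^+(M\otimes N)\cap(M^a\otimes N^b)=(\P^+M)^a\otimes N^b+M^a\otimes(\P^+N)^b$, which vanishes when both $M^a$ and $N^b$ are $\P$-indecomposable; applied to $R_\ell=\RR_{n-1,qk}\otimes\RR_{1,k}$ this shows $R_\ell^d$ is $\P$-indecomposable in $R_\ell$, and the injectivity of $\pi$ transfers the property to $\RR_{n,k}^d$.

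The primary obstacle is the injectivity step in part (1): the reduction to the irreducible Steinberg top-degree is structurally clean, but verifying $\pi_{\ell_0}(\gamma)\ne 0$ requires an explicit identification of the image of $\gamma$ in the quotient $R_{\ell_0}$, and the propagation from top-degree injectivity down to arbitrary degrees relies on a socle-type statement for $\RR_{n,k}$ (that every nonzero element has nonzero product into $\RR_{n,k}^d$) which, while expected from the Cohen--Macaulay structure, is not immediate from the general Stanley--Reisner theory reviewed in Section~\ref{SR-ring}.
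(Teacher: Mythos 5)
Your part (2) and your ideal computation track the paper closely. The verification that $\V_{H',V^*}^{k}\in I(H,qk)\cdot\Sym^*(V^*)+(u_\ell^k)$ via $\prod_{c\in\F_q}(u+cu_\ell)=u^q-uu_\ell^{q-1}$, the Frobenius, and the estimate $(q-1)q^si\ge q^sr=k$ is a correct variant of the paper's Lemma \ref{ideal-rel} (proved there by a determinant manipulation), and it uses $r\le q-1$ at exactly the same spot. Part (2) --- induction on $n$ through the embedding, the Cartan formula for $\chi(\P^i)$, and a one-variable base case settled by Lucas-type arguments --- is essentially the paper's proof; the paper handles the base case by reducing the vanishing of $\chi(\P^i)(x^t)$ to the non-hitness of the spike $x^{q^mr-1}$ via the $\chi$-trick rather than by extracting coefficients of $\chi(\bP)(y)^m$, but both routes work. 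One correction: your ``tensor lemma'' is false as an equality (already $\P^1m\otimes n$ need not lie in $\P^+(M\otimes N)$); only the inclusion $\P^+(M\otimes N)\subseteq \P^+M\otimes N+M\otimes \P^+N$ holds, and that inclusion is all your argument actually needs.

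The genuine gap is the injectivity of $\pi$ in part (1), and it is exactly where you diverge from the paper. Your reduction to the top degree requires (i) that every nonzero $u\in\RR_{n,k}$ admits $v$ with $0\ne uv\in\RR_{n,k}^d$, i.e.\ that the socle of the Artinian reduction of $\F_q[\Delta(V^*,k)]$ is concentrated in the top degree; this is the statement that matroid complexes are \emph{level}, a nontrivial theorem of Stanley which you neither prove nor can extract from the material reviewed in Section \ref{SR-ring}. As written, $\pi$ is only shown injective in degree $d$, so the embedding is not established. You also leave open (ii) the computation $\pi_{\ell_0}(\gamma)\ne 0$ (it does hold: for $\ell_0=\F_q\langle x_n\rangle$ and $H=\langle x_1,\ldots,x_{n-1}\rangle$ the image of $\gamma$ is, up to a nonzero scalar, $\bigl(\L_{n-1}^{qk}/(x_1\cdots x_{n-1})\bigr)\otimes x_n^{k-1}$, which is nonzero by Theorem \ref{main1}(1) applied to $(n-1,qk)$), and (iii) the $\GL_n$-equivariance of $\prod_\ell\pi_\ell$, which needs $J_\ell$ to be independent of the chosen complement $H$, i.e.\ also the reverse inclusion $I(H,qk)+(u_\ell^k)\subseteq I(V^*,k)+(u_\ell^k)$; this follows from your same congruence together with the bijection $H'\mapsto H'\cap H$ between hyperplanes of $V^*$ containing $\ell$ and hyperplanes of $H$, but it is missing from the write-up. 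The paper's route avoids all of this representation-theoretic and socle input: it notes that the kernel of $\Sym^*(V^*)\to\prod_{\ell}\Sym^*(V^*)/(u_\ell^k)$ is the principal ideal $(\L_n^k)$, which lies in $I(V^*,k)$, and so gets the embedding $\RR_{n,k}\hookrightarrow\prod_\ell\Sym^*(V^*)/\bigl(I(V^*,k)+(u_\ell^k)\bigr)$ directly, with Lemma \ref{ideal-rel} identifying each factor with $\RR_{n-1,qk}\otimes\RR_{1,k}$. So either import Stanley's levelness theorem and complete points (ii) and (iii), or replace your injectivity step by this intersection-of-ideals argument.
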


In order to prove this proposition, we need the following lemma whose proof will be again given later.

\begin{lem} \label{ideal-rel}
	For each $0\not= y \in V^*$, 
	$I(V^*,k)+(y^k) = I(W,qk) + (y^k)$ where $W$ is linear complement of $\F_q\langle y\rangle$ in $V^*$. 
\end{lem}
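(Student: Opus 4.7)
The plan is to classify hyperplanes $H$ of $V^*$ according to whether $y\in H$, and then analyze each generator $\V_{H,V^*}^k$ of $I(V^*,k)$ modulo $(y^k)$. The cases where $y\notin H$ should be trivial: since $y$ is nonzero, the line $\F_q\langle y\rangle$ belongs to $\Line_{V^*}\setminus \Line_H$, so we may take $y$ itself as one of the factors $u_\ell$ in the product $\V_{H,V^*}=\prod_{\ell\in \Line_{V^*}\setminus \Line_H}u_\ell$. Hence $y\mid \V_{H,V^*}$, and so $\V_{H,V^*}^k\in (y^k)$.

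The substantive case is $y\in H$, i.e. $H=H'\oplus \F_q\langle y\rangle$ where $H':=H\cap W$ is a hyperplane of $W$. The first step here is to parameterize the lines in $\Line_{V^*}\setminus \Line_H$ in two families: (i) lines in $\Line_W\setminus \Line_{H'}$, and (ii) lines $\F_q\langle u_\ell+\mu y\rangle$ indexed by $\ell\in \Line_W\setminus \Line_{H'}$ and $\mu\in \F_q^\times$. Applying the elementary identity $\prod_{\mu\in \F_q^\times}(u+\mu y)=u^{q-1}-y^{q-1}$ for each $\ell$, one obtains the factorization
$$\V_{H,V^*}\;=\;\V_{H',W}\cdot \prod_{\ell\in \Line_W\setminus \Line_{H'}}\bigl(u_\ell^{q-1}-y^{q-1}\bigr).$$

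Next, I would exploit the hypothesis $k=q^sr$ with $1\le r\le q-1$. The Frobenius gives $(u_\ell^{q-1}-y^{q-1})^{q^s}=u_\ell^{(q-1)q^s}-y^{(q-1)q^s}$, and expanding the remaining $r$-th power binomially yields
$$(u_\ell^{q-1}-y^{q-1})^{k}=\sum_{j=0}^{r}\binom{r}{j}(-1)^{j}\,u_\ell^{(q-1)q^s(r-j)}\,y^{(q-1)q^sj}.$$
The key point, and the place where $r\le q-1$ enters decisively, is that for every $j\ge 1$ the $y$-exponent $(q-1)q^sj$ is already $\ge (q-1)q^s\ge rq^s=k$, so every $j\ge 1$ term lies in $(y^k)$. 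Only the $j=0$ contribution survives modulo $(y^k)$, and multiplying over $\ell\in \Line_W\setminus \Line_{H'}$ produces the clean congruence
$$\V_{H,V^*}^k\;\equiv\;\V_{H',W}^k\cdot \V_{H',W}^{(q-1)k}\;=\;\V_{H',W}^{qk}\pmod{(y^k)}.$$

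Both inclusions of the lemma follow at once. Each generator $\V_{H,V^*}^k$ of $I(V^*,k)$ lies in $I(W,qk)+(y^k)$ (either by the trivial case when $y\notin H$, or by the congruence above when $y\in H$); conversely, every generator $\V_{H',W}^{qk}$ of $I(W,qk)$ is congruent modulo $(y^k)$ to the generator $\V_{H,V^*}^k$ corresponding to $H:=H'\oplus \F_q\langle y\rangle$. The main obstacle is really the Frobenius-plus-binomial step: without the hypothesis $r\le q-1$ a middle binomial term would survive modulo $(y^k)$, and indeed the lemma fails for general $k$ (for instance $q=3$, $k=4$, where the $j=1$ term $\binom{4}{1}u_\ell^{(q-1)\cdot 3}y^{(q-1)\cdot 1}$ has $y$-exponent $2<4=k$ and does not vanish in $\F_3$), so the restriction on the shape of $k$ cannot be weakened.
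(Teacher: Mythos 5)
Your argument is correct, and it is organized the same way as the paper's proof (split the generators according to whether $y\in H$; observe that $y\mid \V_{H,V^*}$ when $y\notin H$; establish $\V_{H,V^*}^k\equiv \V_{H',W}^{qk}\pmod{(y^k)}$ when $y\in H$; conclude both inclusions from the bijection $H'\mapsto H'\oplus\F_q\langle y\rangle$). Where you differ is in how the key congruence is obtained. The paper works with the Dickson determinants: from $\L_H\cdot\V_{H,V^*}=\L_{V^*}$ (up to scalar) it reduces modulo powers of $y$ to get $\L_{W'}^q\V_{H,V^*}\equiv \L_W^q\pmod{(y^r)}$, cancels $\L_{W'}^q$ (implicitly using that it is a nonzerodivisor modulo $(y^r)$, since it involves only the basis of the hyperplane $W'=H'$ of $W$) to get $\V_{H,V^*}\equiv\V_{W',W}^q\pmod{(y^r)}$, and then applies Frobenius to the $q^s$ and raises to the $r$-th power. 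You instead factor $\V_{H,V^*}$ directly over the lines not in $H$, via the parameterization by $(\ell,\mu)\in(\Line_W\setminus\Line_{H'})\times\F_q$ and the identity $\prod_{\mu\in\F_q^\times}(u+\mu y)=u^{q-1}-y^{q-1}$, and then kill the cross terms of the binomial expansion of $(u_\ell^{(q-1)q^s}-y^{(q-1)q^s})^r$ using $(q-1)q^s\ge rq^s=k$. Your route is more elementary (no determinant identity, no cancellation of $\L_{W'}^q$) and it makes completely transparent where the hypothesis $r\le q-1$ is used, whereas in the paper it enters only through the choice of the modulus $(y^r)$; the paper's route, on the other hand, stays closer to the Dickson--M\`ui formalism used elsewhere in the text.

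Two small caveats. First, your factorization $\V_{H,V^*}=\V_{H',W}\prod_{\ell}(u_\ell^{q-1}-y^{q-1})$ holds only up to a nonzero scalar, because the chosen generator of the line $\F_q\langle u_\ell+\mu y\rangle$ need not be $u_\ell+\mu y$ itself; this is harmless since the statement is about ideals, but it should be said. Second, your closing claim that the lemma itself fails for general $k$ is not proved by noting that a binomial term survives -- that only shows this particular argument breaks down; it happens to be true (e.g.\ for $n=2$, $q=3$, $k=4$ one checks $\V_{\langle y\rangle,V^*}^4\equiv x^{12}-x^{10}y^2\pmod{(y^4)}$, which does not lie in $(x^{12})+(y^4)$), but as written the remark overstates what has been shown.
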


\begin{proof}[Proof of Proposition \ref{emb}] The kernel of the natural map of $\P$-modules $\Sym^*(V^*)\to \prod_{\ell\in \Line_{V^*}} \Sym^*(V^*)/(u^k_\ell)$ is the principal ideal of $\Sym^*(V^*)$ generated by $\L_{n}^k$.
	Since $\L_{n}^k$ belongs to the ideal $I_{V^*,k}$, this map induces an inclusion of $\P$-modules $$\Sym^*(V^*)/I(V^*,k)\hookrightarrow \prod_{\ell\in \Line_{V^*}} \Sym^*(V^*)/(I(V^*,k)+(u^k_\ell)).$$
	The first part of the proposition now follows from Lemma \ref{ideal-rel}.
	
	For the second part, by induction we see that $\RR_{n,k}$ is embedded in a direct product of copies of $\F_q[x]/(x^{q^{n-1}k})\otimes \cdots \otimes \F_q[x]/(x^k)$. 
	By Cartan formula, it suffices now to prove that, for all $m\ge 0$, we have $\chi(\Pcal^i)(x^t)=0$ in $\F_q[x]/(x^{q^mr})$ 	whenever $t+qi>q^mr-1$. Suppose in contrary that $\chi(\Pcal^i)(x^t)\not=0$, then using $\chi$-trick and instability, we have up to a nonzero scalar $$x^{q^mr-1}=x^{q^mr-1-(q-1)i-t}\cdot \chi(\Pcal^i)(x^t)\equiv  x^t\cdot \Pcal^i(x^{q^mr-1-(q-1)i-t})=0$$
	modulo $\P$-decomposable elements in $\F_q[x]/(x^{q^mr})$. This contradicts the fact that the ``spike" $x^{q^mr-1}$ is $\P$-indecomposable in $\F_q[x]/(x^{q^mr})$. Indeed if $x^{q^mr-1}$ were $\P$-decomposable, then up to nonzero scalar it would be the image under some operation $\P^{p^a}$ of $x^{q^mr-1-p^a(q-1)}$ (noting that $\P$ is multiplicatively generated by all the $\P^{p^a}$, $a\ge 0$, and not by all the $\P^{q^a}$, $a\ge 0$ \cite[page 338]{LarrySmith}). But we have 
	$$\P^{p^a}(x^{q^mr-1-p^a(q-1)})=\binom{q^mr-1-p^a(q-1)}{p^a}x^{q^mr-1} =\binom{(q^mr-p^aq)+(p^a-1)}{p^a}x^{q^mr-1}.$$
Since $q^mr-p^aq>0$, we get $q^m>p^aq/r>p^a$, and so $(q^mr-p^aq)$ is divisible by $p^{a+1}$. It follows that $p^a$ does not appear in the $p$-adic expansion of $(q^mr-p^aq)+(p^a-1)$ and so the binomial coefficient $\binom{(q^mr-p^aq)+(p^a-1)}{p^a}$ is zero, completing the proof that $x^{q^mr-1}$ is $\P$-indecomposable. 
\end{proof}

 	\begin{rmk}The second part of Proposition \ref{emb} is equivalent to the fact that the $\P$-module
 		$\Sigma^d\Dual(\RR_{n,k})$ is unstable. Here given $M$ a $\P$-module, $\Dual(M)$ is the Spanier-Whitehead dual of $M$, which is defined by 
 		$$\begin{cases}
 		(\Dual M)^{-i}=\Hom_{\F_q}(M^i,\F_q),\qquad i\in \Z,\\
 		\theta(f)=f\circ (\chi(\theta)),\qquad f\in \Dual M, \theta\in \P.
 		\end{cases}$$
 	\end{rmk}

 We now present the proofs of the above lemmas.

\begin{proof}[Proof of Lemma \ref{Vn}] Recall that the map $\Ph:\F_q[x_1,\ldots,x_n]\to \F_q[[x_1,\ldots,x_n]]$, $f\mapsto \sum_{i\ge 0} \Ph^i(f)$, is a homomorphism of algebras and $\Ph(x)=x+x^q+x^{q^2}+\cdots$ for each linear form $x$. Here $\Ph^i=(-1)^i\chi(\P^i)$ and $\Ph=1+\Ph^1+\Ph^2+\cdots$. 
	
	Put 
	$P(x_1,\ldots,x_{n-1};y_1,\ldots,y_r):=\Ph\big( \prod_{i=1}^{n-1}(1-x_i^{q-1})\cdot  y_1^{q^s}\cdots y_r^{q^s}\big)$
	and let  $R(x_1,\ldots,x_{n-1};y_1,\ldots,y_r)$ denote its homogeneous component of  degree $q^{n-1+s}r$.
	Since $\prod_{i=1}^{n-1}(1-x_i^{q-1})=\sum_{i=0}^{n-1}(-1)^ie_i$, the lemma will follow from the identity $$R(x_1,\ldots,x_{n-1};y_1,\ldots,y_r)=\V(x_1,\ldots,x_{n-1},y_1)^{q^s}\cdots \V(x_1,\ldots,x_{n-1},y_r)^{q^s}.$$
	For this it is sufficient to prove that $R(x_1,\ldots,x_{n-1};y_1,\ldots,y_r)$ is null whenever each $y_j$ is a linear combination of $x_1,\ldots,x_{n-1}$. Since $P(x_1,\ldots,x_{n-1};y_1,\ldots,y_r)$ is linear in each $y_j$ and symmetric in $x_1,\ldots,x_{n-1}$ as well as in $y_1,\ldots,y_r$, we need only to prove that  $R(x_1,\ldots,x_{n-1};y_1,\ldots,y_r)$ is null if $y_1=x_{1}$. But since $\Ph(x_{1}(1-x_{1}^{q-1}))=\Ph(x_{1}-x_{1}^q)=x_{1}$, it follows that 
	\begin{eqnarray*}
		P(x_1,\ldots,x_{n-1};x_1,y_2,\ldots,y_r)=x_1(x_1+x_1^q+\cdots)^{q^s-1}	\prod_{i=2}^{n-1}\big(1-(x_i+x_i^q+\cdots)^{q-1}\big)\prod_{j=2}^r(y_j+y_j^{q}
		+\cdots )^{q^s}
		\\
		= x_1\prod_{k=0}^{s-1}(x_1^{q^k}+x_1^{q^{k+1}}+\cdots)^{q-1}
		\prod_{i=2}^{n-1}\big(1-(x_i+x_i^q+\cdots)^{q-1}\big)\prod_{j=2}^r(y_j^{q^s}+y_j^{q^{s+1}}
		+\cdots ).
	\end{eqnarray*}
	It is clear that a monomial occurring in this product is of the form $x_1^{1+i_1}x_2^{i_2}\cdots x_{n-1}^{i_{n-1}} y_2^{j_2}\cdots y_r^{j_r}$ where $\alpha(i_1)\le s(q-1)$, $\alpha(i_\ell)\in \{0,q-1\}$ for $2\le \ell \le n-1$, and $\alpha(j_\ell)=1$ for $2\le\ell \le r$. Here the function $\alpha$ is defined by $\alpha(a):=a_0+a_1+\cdots$ where $a=a_0+a_1q+a_2q^2+\cdots$ is the $q$-adic expansion of $a$. 
	It follows that if the degree of such a monomial is $q^{n-1+s}r$ then we must have
	\begin{eqnarray*}
		\alpha(q^{n-1+s}r-1)&=&\alpha(i_1+i_2+\cdots+i_{n-1}+j_2+\cdots+j_r)\\
		&\le& \alpha(i_1)+\alpha(i_2)+\cdots+\alpha(i_{n-1})+\alpha (j_2)+\cdots+\alpha(j_r)\\
		&\le& (n-2+s)(q-1)+(r-1).
	\end{eqnarray*}
	This is a contradiction since we have $q^{n-1+s}r-1=(q-1)(1+q+\cdots +q^{n-2+s})+(r-1)q^{n-1+s}$, which implies that $\alpha(q^{n-1+s}r-1)=(n-1+s)(q-1)+(r-1).$
	The lemma follows.
\end{proof}

\begin{proof}[Proof of Lemma \ref{ideal-rel}] Let $W$ be a linear complement of $\F_q\langle y\rangle$ in $V^*$. If $H$ is a hyperplane of $V^*$ not passing through $y$, then $\V_{H,V^*}=\L_{V^*}/\L_H$ is divisible by $y$. It follows that $I(V^*,k)+(y^k)$ is generated by $(y^k)$ and the $\V_{H,V^*}^k$'s where $H$ is a hyperplane passing through $y$. Let $H$ be such a hyperplane. Then $W':=H\cap W$ is a hyperplane in $W$ and $H=\F_q\langle y\rangle \oplus W'$.
Choose a basis $\{y_2,\ldots,y_{n}\}$ of $W$ such that $\{y_2,\ldots,y_{n-1}\}$ is a basis of $W'$. 
We have
$$\begin{vmatrix}
y & y_2 & \cdots & y_{n-1}\\
y^q & y_2^q & \cdots & y_{n-1}^q \\
\vdots & \vdots & \ddots & \vdots \\
y^{q^{n-1}} & y_2^{q^{n-1}} & \cdots &y_{n-1}^{q^{n-1}}
\end{vmatrix}\V_{H,V^*}=\begin{vmatrix}
y & y_2 & \cdots & y_n\\
y^q & y_2^q & \cdots & y_n^q \\
\vdots & \vdots & \ddots & \vdots \\
y^{q^{n-1}} & y_2^{q^{n-1}} & \cdots &y_n^{q^{n-1}}
\end{vmatrix}.$$
Noting that $1\le r \le q-1$, this gives 
$$\L_{W'}^q \V_{H,V^*}\equiv \L_W^q \mod{(y^r)},$$
and so $\V_{H,V^*}\equiv \V_{W',W}^q \mod{(y^r)}.$
From this we deduce that $\V_{H,V^*}^{q^s}\equiv \V_{W',W}^{q^{s+1}} \mod (y_1^{q^sr})$ and thus we get $$\V_{H,V^*}^{q^sr}\equiv \V_{W',W}^{q^{s+1}r} \mod (y_1^{q^sr}).$$ The result follows. 
\end{proof}



\begin{rmk} We note that, by putting $n=m+1$, in the simplest case where $r=1$ and $s=0$, this lemma gives: $$\V(x_1,\ldots,x_m,x)=(-1)^{m}\sum_{i=0}^{m}\chi(\P^{\frac{q^{m}-1}{q-1}-i})\big( x e_i \big),$$
	Comparing this with the formula defining the Dickson invariants $\Q_{m,0}, \ldots, \Q_{m,m-1}$ \cite{Dic11}:
	$$\V(x_1,\ldots,x_{m},x)=\sum_{j=0}^{m-1} (-1)^{m-j}\Q_{m,j}x^{q^j},$$
	we obtain 
	$$ \Q_{m,j} = \sum_{i=0}^{m}\chi(\P^{\frac{q^{m}-q^j}{q-1}-i})\big( e_i \big).$$
	Note that (\cite[Lemma 2.4]{MinhWalker}) if $f$ is of degree $s$ and $\alpha(r(q-1)+s)>s$ then $\Ph^r(f)=0$ (this is proved easily by letting $\Ph$ act on a product of $s$ linear form, and then using the definition of the function $\alpha$). Applying this to each term in the above expression of $\Q_{m,j}$, we have 
	\begin{eqnarray*}
		\alpha\bigg(\big(\frac{q^{m}-q^j}{q-1}-i\big)(q-1)+i(q-1)\bigg)=\alpha(q^m-q^j)=(m-j)(q-1)>i(q-1)\Longleftrightarrow m-j >i.
	\end{eqnarray*}
	It follows that the expression above for $\Q_{m,j}$ can be simplified to:
	$$ \Q_{m,j} = \sum_{i=m-j}^{m}\chi(\P^{\frac{q^{m}-q^j}{q-1}-i})\big( e_i \big).$$
	For $j=0$, we thus recover in a different way the formula for the top Dickson invariant $\Q_{m,0}$ in \cite[Theorem 1.1]{MinhWalker}:
	$$\Q_{m,0}=\chi(\P^{\frac{q^{m}-1}{q-1}-m})(e_m).$$
\end{rmk}

\section{Proof of Theorem \ref{cuspidal}}

In this section we prove the following:
\begin{thm}[Theorem \ref{cuspidal}]
	$\dim_{\F_q}\Quot^{q^{n-1}-n}(\Sym^*(V^*))=(q-1)(q^2-1)\cdots (q^{n-1}-1),$ $n\ge 2$.
\end{thm}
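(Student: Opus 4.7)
The strategy mirrors that of Theorem \ref{main1}: the plan is to introduce a Stanley-Reisner type quotient $\RR(V^*,H)$ of $\Sym^*(V^*)$ built from a matroid complex whose Alexander dual is homotopy equivalent to the complex used by Lusztig \cite{Lusztig74} to realize the cuspidal representations of $\GL_n$, and then to transfer the dimension computation to $\Quot^{d'}(\Sym^*(V^*))$, where $d':=q^{n-1}-n$, via the same two-step argument (hit-decomposition plus $\P$-indecomposability) used in Sections \ref{R-Steinberg}--\ref{R-Steenrod}. Concretely, fix a hyperplane $H\subset V^*$ and put $\Line':=\Line_{V^*}\setminus\Line_H$ (so $|\Line'|=q^{n-1}$), and define $\Delta(V^*,H)$ on $\Line'$ by declaring $F\subseteq \Line'$ to be a face iff $\Line'\setminus F$ linearly spans $V^*$. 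Its facets are the $\Line'\setminus B$ for $n$-frames $B\subseteq\Line'$, its minimal non-faces are the $\Line'\setminus \Line_W$ for hyperplanes $W\ne H$, and the argument of Lemma \ref{Deltamatroid} identifies it as the dual of the linear matroid on $\Line'$, hence as a matroid complex. Taking $\iota:V\hookrightarrow \F_q\langle\Line'\rangle$, $\iota(v)=\sum_{\ell\in\Line'}u_\ell(v)Y_\ell$ (property (*) holds because $\Line'\setminus F$ spans $V^*$ for every face $F$), Proposition \ref{lsop-dual} yields a finite quotient $\RR(V^*,H):=\Sym^*(V^*)/\iota^*(I_{\Delta(V^*,H)})$, concentrated in degrees $\le d'$, with defining ideal generated by the polynomials $g_W:=\prod_{\ell\in \Line'\setminus\Line_W}u_\ell$ (for $W\ne H$ a hyperplane) and with top-degree dimension $(-1)^n\tilde\chi(\Delta(V^*,H)^*)$.

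The Alexander dual $\Delta(V^*,H)^*$ is the complex of non-spanning subsets of $\Line'$. The poset map $F\mapsto \sp F$ lands in the poset $\Tits'$ of proper nonzero subspaces of $V^*$ not contained in $H$; each fiber above $\Tits'_{\le W}$ has the contractible maximum $\Line_W\cap \Line'$ (nonempty precisely because $W\not\subseteq H$), so Quillen's Fiber Lemma gives $\Delta(V^*,H)^*\simeq \Tits'$. Fixing $v\notin H$, the bijection $W\mapsto W\cap(v+H)$ identifies $\Tits'$ with the poset of proper affine subspaces of $v+H\cong\F_q^{n-1}$---precisely Lusztig's complex---whose top reduced Euler characteristic has magnitude $(q-1)(q^2-1)\cdots(q^{n-1}-1)$, whence $\dim_{\F_q}\RR(V^*,H)^{d'}=(q-1)(q^2-1)\cdots(q^{n-1}-1)$.

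For the hit-quotient descent, each generator $g_W$ has degree $q^{n-2}(q-1)$. Choosing a basis $y_1,\dots,y_n$ of $V^*$ with $y_1,\dots,y_{n-2}$ a basis of $W\cap H$, $y_{n-1}\in W\setminus H$ and $y_n\in H\setminus W$, and parametrizing $\Line'\setminus\Line_W$ by the lines generated by $a_1y_1+\dots+a_{n-2}y_{n-2}+y_{n-1}+ay_n$ (with $a\in\F_q^*$) gives
$$g_W=\prod_{a\in\F_q^*}\V(y_1,\dots,y_{n-2},\,y_{n-1}+ay_n),$$
which is exactly the left-hand side of Lemma \ref{Vn} with parameters $(n,r,s)=(n-1,q-1,0)$. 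Applying the lemma writes $g_W$ as a sum of expressions $\chi(\P^m)(h)$ with $m\ge q^{n-2}-n+1$, and then the $\chi$-trick plus the unstable condition yield $f\cdot g_W\equiv 0\pmod{\P^+\Sym^*(V^*)}$ whenever $\deg f\le d'-\deg g_W=q^{n-2}-n$, giving $\Quot^i(\Sym^*(V^*))\cong \Quot^i(\RR(V^*,H))$ for $i\le d'$, exactly as in Proposition \ref{Rmod}. The $\P$-indecomposability of $\RR(V^*,H)^{d'}$ is then established by mimicking Proposition \ref{emb}: $\RR(V^*,H)$ embeds into $\prod_{\ell\in\Line'}\Sym^*(V^*)/(\iota^*(I_{\Delta(V^*,H)})+(u_\ell))$ (since $\prod_{\ell\in\Line'}u_\ell=\V_{H,V^*}$ already lies in $\iota^*(I_{\Delta(V^*,H)})$), each factor reduces via a Lemma \ref{ideal-rel}-type argument to a truncated polynomial algebra in $n-1$ variables, and the spike argument of Proposition \ref{emb} then kills $\chi(\P^i)(u)$ for $|u|+qi>d'$.

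I expect Step~3 (the Lusztig identification) to be the main obstacle: establishing the homotopy equivalence with Lusztig's complex in a form that extracts the precise top Euler characteristic $(q-1)(q^2-1)\cdots(q^{n-1}-1)$, ideally through a self-contained induction via an exact sequence analogous to \cite[Theorem 1.14]{Lusztig74}, rather than invoking Lusztig's geometric construction as a black box.
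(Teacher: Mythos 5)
Your proposal is correct and takes essentially the same route as the paper: the paper's complex $K$ on the vertex set $E$ (an affine hyperplane of $V^*$) is exactly your $\Delta(V^*,H)$, since each line not in $H$ meets $E$ in a unique point, and the defining ideal, the factorization of each generator as $\prod_{a\in\F_q^\times}\V(\cdot)$, the use of Lemma \ref{Vn} with $r=q-1$, $s=0$ together with the $\chi$-trick and instability, the embedding whose factors are $\RR(W,q-1)$ handled by (a Lemma \ref{ideal-rel}-type reduction and) Proposition \ref{emb}, and the Quillen Fiber Lemma identification of the Alexander dual with Lusztig's poset of proper affine subspaces all appear verbatim in the paper's proof. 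The paper, like you, quotes Lusztig's computation of the affine Steinberg module's dimension rather than rederiving it, so the step you flagged as a potential obstacle is treated there exactly as in your fallback.
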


This is proved by considering a matroid complex $K$ which can be seen as an affine version of the complex $\Delta(V^*)$. To define $K$, let us fix a hyperplane $W$ in $V^*$ and let $E$ denote an affine space not containing the origin and parallel to $W$. 
Let $K$ be the simplicial complex on the vertex set $E$ in which a face is of the form $E\setminus F$ where $F$ affinely spans $E$, or equivalently, $F$ linearly spans $V^*$. 

We check easily that the facets of $K$ are complements in $E$ of affine bases of $E$ and a minimal non-face of $K$ is of the form $E\setminus H$ where $H$ is a hyperplane in the affine space $E$. Since a facet of $K$ has cardinality $d:=q^{n-1}-1$, $K$ is a $(d-1)$-dimensional simplicial complex.  

The Alexander dual $K^*$ of $K$ is the simplicial complex on the vertex set $E$ in which $F$ is a face if $F$ does not affinely span $E$. A facet of $K^*$ is thus a hyperplane in the affine space $E$.

\begin{lem}\label{K-matroid}
	$K$ is a matroid complex.
\end{lem}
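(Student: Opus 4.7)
The plan is to imitate the proof of Lemma \ref{Deltamatroid} by realizing $K$ as the dual of a simpler matroid complex, rather than verifying the exchange property for $K$ directly. Concretely, I would introduce the simplicial complex $\nabla$ on the vertex set $E$ whose faces are the affinely independent subsets of $E$.

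The first step is to show that $\nabla$ is a matroid complex. The key observation is that, because $E$ is an affine hyperplane of $V^*$ not containing the origin, a subset $F\subseteq E$ is affinely independent if and only if it is linearly independent when viewed as a set of vectors in $V^*$. One direction follows by projecting any nontrivial linear relation $\sum_{i}\lambda_if_i=0$ with $f_i\in E$ onto the one-dimensional quotient $V^*/W$, which forces $\sum_i\lambda_i=0$ and hence converts the linear relation into an affine one; the converse is immediate. Therefore $\nabla$ coincides with the independence complex of the vector matroid attached to $E\subset V^*$, so it is a matroid complex \cite[Section 1.1]{Oxley}.

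Next, the facets of $\nabla$ are the maximal linearly independent subsets of $E$, and since $E$ linearly spans $V^*$ each of them has cardinality $n$ and is precisely an affine basis of $E$. By definition, a facet of $K$ is the complement in $E$ of an affine basis, so $K$ is the matroid-theoretic dual of $\nabla$ in the sense recalled in the proof of Lemma \ref{Deltamatroid}. Applying the fact that the dual of a matroid complex is again a matroid complex \cite[Theorem 2.1.1]{Oxley} then yields the desired conclusion.

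I do not anticipate any substantial obstacle: the argument is a direct transposition of Lemma \ref{Deltamatroid}, and the only step demanding care is the equivalence between affine and linear independence, which genuinely relies on the hypothesis $0\notin E$ so that an affine basis of $E$ is also a linear basis of $V^*$ rather than of some proper subspace.
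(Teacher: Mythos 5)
Your proposal is correct and follows essentially the same route as the paper: the paper also introduces the complex of affinely independent subsets of $E$, observes it is a matroid complex, and identifies $K$ as its dual via the fact that the dual of a matroid complex is a matroid complex. The only difference is that you spell out the equivalence of affine and linear independence (using $0\notin E$), a point the paper simply declares clear.
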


\begin{proof}Let $B$ be the simplicial complex on $E$ whose faces are affinely independent subsets of $E$. It is clear that $E$ is a matroid complex. The dual of this matroid complex is $K$ and so $K$ is also a matroid complex. 
\end{proof}

 	\begin{lem}
 	The map $\iota:V \to \F_q\langle E\rangle$ defined by $\iota(v)=\sum_{\alpha\in E}\alpha(v)Y_\alpha$ is a monomorphism satisfying the property {\rm (*)} and its dual $\iota^*: \F_q^E\to V^*$ sends each basis element $X_\alpha\in \F_q^E$ to $\alpha\in V^*$.	
 	\end{lem}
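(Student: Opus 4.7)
The plan is to mirror the proof of Lemma \ref{iota-property} almost verbatim, since the setup is completely parallel: we have replaced the vertex set $\Line\sqcup\cdots\sqcup\Line$ by $E$, the linear forms $u_\ell$ by the affine functionals $\alpha\in E\subset V^*$, and the spanning condition ``$F_1\cup\cdots\cup F_k$ linearly spans $V^*$'' by ``$F$ linearly spans $V^*$'' (which, for subsets of $E$, is equivalent to affine spanning of $E$).

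First I would verify that $\iota$ is a monomorphism. If $\iota(v)=0$ then $\alpha(v)=0$ for every $\alpha\in E$. Since $E$ is an affine hyperplane not passing through the origin, the set $E$ spans $V^*$ as a vector space (indeed it already spans it affinely, and the affine span of a set avoiding $0$ together with scalar multiples recovers the linear span). Hence $v=0$.

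Next I would verify property (*). A face of $K$ has the form $E\setminus F$ where $F$ linearly spans $V^*$. Suppose $\iota(v)\in \F_q\langle E\setminus F\rangle$; expanding $\iota(v)=\sum_{\alpha\in E}\alpha(v)Y_\alpha$, this forces $\alpha(v)=0$ for every $\alpha\in F$. Since $F$ linearly spans $V^*$, we conclude $v=0$, so $\iota(V)\cap \F_q\langle E\setminus F\rangle=0$ as required.

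Finally, for the formula for $\iota^*$, the computation is the same one-line check used in Lemma \ref{iota-property}: for any $v\in V$,
\[
\iota^*(X_\alpha)(v)=X_\alpha(\iota(v))=X_\alpha\Bigl(\sum_{\beta\in E}\beta(v)Y_\beta\Bigr)=\alpha(v),
\]
so $\iota^*(X_\alpha)=\alpha$ as an element of $V^*$. No step here looks like an obstacle; the only point that deserves a sentence of justification is the claim that $E$ spans $V^*$ linearly, and this follows from the hypothesis that $E$ is parallel to a hyperplane $W$ but does not contain the origin.
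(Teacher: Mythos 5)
Your proof is correct and follows essentially the same route as the paper's: injectivity and property (*) both reduce to the fact that the relevant subset of $E$ spans $V^*$ linearly, and the formula for $\iota^*$ is the same one-line evaluation as in Lemma \ref{iota-property}. Your extra remark that $E$ spans $V^*$ because it is an affine hyperplane avoiding the origin is a welcome (and valid) elaboration of what the paper dismisses as ``clear.''
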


\begin{proof}It is clear that $\iota$ is a monomorphism. Now given $E\setminus F$ a face of $K$, if $\iota(v)=\sum_{\alpha\in E}\alpha(v)Y_\alpha$ belongs to $\F_q\langle E\setminus F\rangle$, then one must have $\alpha(v)=0$ for all $\alpha\in F$, which implies that $v=0$ since $F$ is a spanning set of $V^*$. 
\end{proof}

Now we can apply Proposition \ref{lsop-dual} to study $\Sym^*(V^*)/\iota^*(I_K)$. The Stanley-Reisner ideal $I_K$ is generated by the monomials $\prod_{\alpha \in E\setminus H} X_\alpha$ where $H$ is running on the set of hyperplanes in the affine space $E$. It follows that the polynomials 
$\Phi_H:=\prod_{\alpha\in E\setminus H} \alpha$ generates the ideal $I:=\iota^*(I_K)$ of $\Sym^*(V^*)$. Note that $I$ is stable under the action of the affine subgroup $\Aff(E)$ of $\GL(V^*)$ and so the natural projection $\Sym^*(V^*)\twoheadrightarrow \Sym^*(V^*)/I$ is a map of $\Aff(E)$-modules.

\begin{prop} \label{Rmod-aff} For each $0\le i\le q^{n-1}-n$, the natural projection
	$\Sym^*(V^*)\twoheadrightarrow \Sym^*(V^*)/I$ induces an isomorphism of $\Aff(E)$-modules $\Quot^i(\Sym^*(V^*))\cong \Quot^i(\Sym^*(V^*)/I)$.  
\end{prop}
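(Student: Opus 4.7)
The plan is to follow the same strategy as for Proposition \ref{Rmod}. The induced map $\Quot^i(\Sym^*(V^*))\to \Quot^i(\Sym^*(V^*)/I)$ is automatically surjective, so the content lies in injectivity, which amounts to showing that every element of $I$ of degree at most $q^{n-1}-n$ is $\P$-decomposable in $\Sym^*(V^*)$. Since each generator $\Phi_H$ has degree $|E\setminus H|=q^{n-1}-q^{n-2}$, this reduces to showing that $f\cdot \Phi_H$ is hit whenever $\deg(f)\le q^{n-2}-n$.

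Since $\Aff(E)$ acts transitively on the affine hyperplanes of $E$ and its action commutes with $\P$, I first reduce to a single conveniently chosen $H_0$. For any other affine hyperplane $H=g(H_0)$ with $g\in \Aff(E)\subset \GL(V^*)$, one has $\Phi_H=g(\Phi_{H_0})$ (since $g$ permutes the elements of $E$), hence $f\cdot \Phi_H=g\bigl(g^{-1}(f)\cdot \Phi_{H_0}\bigr)$ with $\deg(g^{-1}(f))=\deg(f)$, and $g$ preserves the subspace of hit elements.

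For the choice of $H_0$, pick a basis $x_1,\ldots,x_n$ of $V^*$ so that $W=\langle x_1,\ldots,x_{n-1}\rangle$ and $E=x_n+W$, and take
$$H_0=\{a_2x_2+\cdots+a_{n-1}x_{n-1}+x_n:a_i\in \F_q\}.$$
A direct regrouping of the defining product yields
$$\Phi_{H_0}=\prod_{a\in \F_q^*}\V(x_2,\ldots,x_{n-1},\,ax_1+x_n).$$
I then invoke Lemma \ref{Vn} with $n-1$ replaced by $n-2$, $r=q-1$, $s=0$, and the linear forms $y_j$ running through $\{ax_1+x_n:a\in \F_q^*\}$. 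Using the elementary identity $\prod_{a\in \F_q^*}(ax_1+x_n)=x_n^{q-1}-x_1^{q-1}$, this produces an identity of the shape
$$\Phi_{H_0}=(-1)^{n-2}\sum_{i=0}^{n-2}\chi(\P^{q^{n-2}-1-i})\bigl(e_i\cdot(x_n^{q-1}-x_1^{q-1})\bigr),\qquad e_i:=e_i(x_2^{q-1},\ldots,x_{n-1}^{q-1}).$$

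To conclude, apply the $\chi$-trick to each summand of $f\cdot \Phi_{H_0}$ and observe that the Steenrod operation landing on $f$ has degree $q^{n-2}-1-i\ge q^{n-2}-n+1>\deg(f)$ for every $i\in \{0,\ldots,n-2\}$, so instability forces $\P^{q^{n-2}-1-i}(f)=0$ and $f\cdot \Phi_{H_0}$ is hit. The main point requiring care is the initial algebraic identification of $\Phi_{H_0}$ as a product of $\V$-polynomials in exactly the form demanded by Lemma \ref{Vn}; once the $H_0$ above is chosen, this is immediate and the rest of the argument runs in parallel to the proof of Proposition \ref{Rmod}.
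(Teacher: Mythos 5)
Your proposal is correct and follows essentially the same route as the paper: factor $\Phi_{H}$ as $\prod_{a\in\F_q^\times}\V(x_2,\ldots,x_{n-1},ax_1+x_n)$, apply Lemma \ref{Vn} with $s=0$, $r=q-1$ and $n-1$ replaced by $n-2$, and conclude by the $\chi$-trick and instability since $q^{n-2}-1-i>\deg(f)$. The only (harmless) difference is your preliminary reduction to a single hyperplane $H_0$ via transitivity of $\Aff(E)$, which the paper avoids by carrying out the same computation directly for an arbitrary $H$ using a basis adapted to $H$.
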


\begin{proof}
	Let $H$ be a hyperplane in the affine space $E$. We need to prove that $f\cdot \Phi_H$ is $\P$-decomposable whenever $$\deg(f)\le (q^{n-1}-n)-\deg(\Phi_H)=(q^{n-1}-n)-(q^{n-1}-q^{n-2})=q^{n-2}-n.$$
	Suppose $E=y+W$ and $H=y+W'$ where $y\in H$ and $W'$ is a hyperplane in $W$. Choose a basis $\{y_1,\ldots,y_{n-1}\}$ of $W$ such that $\{y_1,\ldots,y_{n-2}\}$ is a basis of $W'$. We have 
	\begin{eqnarray*}
	\Phi_H &=& \prod_{w'\in W', a\in \F_q^\times }(w'+ay_{n-1}+y) = \prod_{a\in \F_q^\times} \V(y_1,\ldots,y_{n-2},ay_{n-1}+y),
	\end{eqnarray*}
	and so by Lemma \ref{Vn}, 
	$$\Phi_H=(-1)^{n-2}\sum_{i=0}^{n-2}\chi(\P^{q^{n-2}-1-i})\big( e_i\cdot   Y \big),$$
	where $e_i:=e_i(y_1^{q-1},\ldots,y_{n-2}^{q-1})$ and $Y=\prod_{a\in \F_q^\times} (ay_{n-1}+y)$. By instability, $\P^{q^{n-2}-1-i}(f)=0$ for all $0\le i\le n-2$ and so the result follows from the $\chi$-trick. 
\end{proof}

\begin{prop}
	$\Sym^*(V^*)/I$ can be embedded as a $\P$-submodule into a direct product of $q^{n-1}$ copies of $\RR(W,q-1)$. 
\end{prop}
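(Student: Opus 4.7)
The plan is to imitate the embedding argument of Proposition~\ref{emb}(1), replacing each line generator by an individual point $\alpha\in E$. The proof splits into two steps: first, produce a $\P$-linear injection of $\Sym^*(V^*)/I$ into a product of $q^{n-1}$ quotients; second, identify each factor with $\RR(W,q-1)$.

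For the first step, I consider the natural $\P$-linear map
$$\Sym^*(V^*)\longrightarrow\prod_{\alpha\in E}\Sym^*(V^*)/(\alpha).$$
Any two distinct points of the affine hyperplane $E$ cannot be $\F_q$-proportional (otherwise $E$ would meet the origin), so the $(\alpha)$ are distinct height-one primes of the polynomial ring, and the kernel of this map is the principal ideal $(\Phi_E)$ with $\Phi_E:=\prod_{\alpha\in E}\alpha$. For any hyperplane $H\subset E$, the factorisation $\Phi_E=\bigl(\prod_{\alpha\in H}\alpha\bigr)\cdot\Phi_H$ shows that $\Phi_E\in(\Phi_H)\subseteq I$; exactly as in the proof of Proposition~\ref{emb}(1), this induces the desired $\P$-linear injection
$$\Sym^*(V^*)/I\hookrightarrow\prod_{\alpha\in E}\Sym^*(V^*)/(I+(\alpha)),$$
a product of $|E|=q^{n-1}$ factors.

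For the second step, I fix $\alpha_0\in E$ and use the direct sum decomposition $V^*=\F_q\langle\alpha_0\rangle\oplus W$ (valid because $\alpha_0\notin W$, as $E\cap W=\emptyset$) to identify $\Sym^*(V^*)/(\alpha_0)$ with $\Sym^*(W)$. I then reduce each generator $\Phi_H$ of $I$ modulo $\alpha_0$. If $\alpha_0\notin H$, then $\alpha_0$ appears as one of the factors of $\Phi_H=\prod_{\alpha\in E\setminus H}\alpha$, so $\Phi_H\equiv 0\pmod{\alpha_0}$. If $\alpha_0\in H$, I write $H=\alpha_0+W_H$ for the unique linear hyperplane $W_H\subseteq W$ corresponding to $H$; then $E\setminus H=\alpha_0+(W\setminus W_H)$ and
$$\Phi_H=\prod_{w\in W\setminus W_H}(\alpha_0+w)\equiv\prod_{w\in W\setminus W_H}w\pmod{\alpha_0}.$$
Grouping on each line $\ell\in\Line_W\setminus\Line_{W_H}$ its $q-1$ nonzero scalar multiples, the right-hand side equals a nonzero scalar times $\V_{W_H,W}^{q-1}$. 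Since $H\mapsto W_H$ is a bijection between the hyperplanes of $E$ through $\alpha_0$ and the linear hyperplanes of $W$, the image of $I$ in $\Sym^*(W)$ is precisely the ideal $I(W,q-1)$, giving
$$\Sym^*(V^*)/(I+(\alpha_0))\cong\Sym^*(W)/I(W,q-1)=\RR(W,q-1),$$
which completes the embedding.

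The main point requiring care is the bookkeeping in this second step: the exponent $q-1$ in $\RR(W,q-1)$---absent from the original generators $\Phi_H$ of $I$, which are \emph{first}-power products---is produced precisely because, for each line $\ell\subseteq W$ outside of $W_H$, all of its $q-1$ nonzero scalar multiples contribute to $W\setminus W_H$, and their product reassembles to $u_\ell^{q-1}$ up to a nonzero scalar. The injectivity of the product map needs no new ideas beyond those already used in Proposition~\ref{emb}(1).
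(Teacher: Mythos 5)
Your proof is correct and follows essentially the same route as the paper: the same product map $\Sym^*(V^*)\to\prod_{\alpha\in E}\Sym^*(V^*)/(\alpha)$ with kernel $(\prod_{\alpha\in E}\alpha)\subseteq I$, followed by reducing each generator $\Phi_H$ modulo a fixed $\alpha_0\in E$ to show $I+(\alpha_0)=I(W,q-1)+(\alpha_0)$, hence each factor is $\RR(W,q-1)$. The only cosmetic difference is that you assemble the product over $W\setminus W_H$ line by line into $\V_{W_H,W}^{q-1}$, whereas the paper groups it as $\prod_{a\in\F_q^\times}\V(y_1,\ldots,y_{n-2},ay_{n-1})$; both give the same identification.
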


\begin{proof}
	The kernel of the natural map $\Sym^*(V^*)\to \prod_{y\in E} \Sym^*(V^*)/(y)$ is the principal ideal of $\Sym^*(V^*)$ generated by $\prod_{y\in E}y$. Since this generator belongs to $I$, this map induces an inclusion of $\P$-modules:
	$$\Sym^*(V^*)/I\hookrightarrow \prod_{y\in E} \Sym^*(V^*)/I+(y).$$
	It suffices now to prove that, for each $y\in E$, $I+(y)=I(W,q-1)+(y)$, and so $\Sym^*(V^*)/I+(y)\cong \RR(W,q-1)$. Indeed, given $H$ a hyperplane in $E$, if $y\not \in H$ then $\Phi_H$ is divisible by $y$. If $y\in H$ then $E=y+W$ and $H=y+W'$ where $W'$ is a hyperplane in $W$. Choose a basis $\{y_1,\ldots,y_{n-1}\}$ of $W$ such that $\{y_1,\ldots,y_{n-2}\}$ is a basis of $W'$. We have then
	\begin{eqnarray*}
	\Phi_H=\prod_{w'\in W', a\in \F_q^\times }(w'+ay_{n-1}+y)&\equiv& \prod_{a\in \F_q^\times} \V(y_1,\ldots,y_{n-2},ay_{n-1}) \pmod{(y)} \\ &=& -\V(y_1,\ldots,y_{n-2},y_{n-1})^{q-1} \pmod{(y)}\\
	\\ &=& -\V_{W',W}^{q-1} \pmod{(y)}.
	\end{eqnarray*}
	The result follows. 
\end{proof}

By Proposition \ref{emb}, we know that the top-degree module $\RR^{q^{n-1}-n}(W,q-1)$ is $\P$-indecomposable, and so the above proposition implies that the top-degree module $(\Sym^*(V^*)/I)^{q^{n-1}-n}$ is $\P$-indecomposable. Combining this with Proposition \ref{Rmod-aff} yields an isomorphism:
$$\Quot^{q^{n-1}-n}(\Sym^*(V^*))\cong (\Sym^*(V^*)/I)^{q^{n-1}-n}.$$
To compute the dimension of $(\Sym^*(V^*)/I)^{q^{n-1}-n}$, we need to compute the reduce Euler characteristic $\tilde{\chi}(K^*)$. Recall that the Alexander dual $K^*$ of $K$ is the simplicial complex on the vertex set $E$ in which a face is a set of points $F$ such that $F$ does not affinely span $E$. Folowing Lusztig \cite{Lusztig74}, let $S$ be the poset of all affine subspaces of $E$ other than $E$ (which is denoted by $S_I(E)$ in \cite{Lusztig74}). Then as in the linear case, the map of posets $f: p(K^*)\to S$ sending a face $F$ of $K^*$ to the affine subspace spanned by $F$ is a homotopy equivalence. This is because for each affine subspace $E'$ of $E$ other than $E$, the poset 
$f^{-1}(S_{\le E'})$ has $E'$ as its unique maximal element. The homotopy equivalence then again follows from the Quillen Fiber lemma.  
The reduced homology $\Ht_*(S;\F_q)$ is only nontrivial in degree $n-2$, and in this degree, $\Ht_{n-2}(S;\F_q)$, known as the \textit{affine Steinberg module} associated to $E$ \cite[1.14]{Lusztig74}, is of dimension $(q-1)(q^2-1)\cdots (q^{n-1}-1)$. It follows that $\tilde{\chi}(K^*)$ is up to sign equal to this dimension and so the proof of Theorem \ref{cuspidal} is complete.

\begin{rmk} By Proposition \ref{lsop-dual}, the elements $\prod_{y\in E\setminus B}\alpha$, where $B$ is an affine basis of $E$, span the $\F_q$-vector space $(\Sym^*(V^*)/I)^{q^{n-1}-n}$. It would be interesting to figure out a basis of this space from a shelling of the matroid complex $K$. Since $(q-1)(q^2-1)\cdots (q^{n-1}-1)$ is also the dimension of each complex cuspidal representation of $\GL_n$, it would also be interesting to study $(\Sym^*(V^*)/I)^{q^{n-1}-n}$ as a modular representation of $\GL_n$ and relate it to the discrete series constructed by Lusztig in \cite[Chapter 3]{Lusztig74}.  
\end{rmk}

\section{Steinberg summand of $\RR_{n,2}$ and Brown-Gitler modules}

In this section we prove Theorem \ref{decomposition}. Suppose $q=2$. We recall that $\RR_{n,2}=\F_2[x_1,\ldots,x_n]/I_{n,2}$, where 
$I_{n,2}$ is the ideal generated by the orbit of the action of $\GL_n$ on $\V_n^2$,  where $$\V_n=\prod_{\lambda_i\in \F_2}(\lambda_1x_1+\cdots+\lambda_{n-1}x_{n-1}+x_n).$$
Put $d=2(2^n-1)-n=\sum_{j=1}^n(2^j-1)$.
We need to prove that there is an isomorphism of $\A$-modules
$$\bigoplus_{j=0}^{n} \Sigma^{d-(2^j-1)}\BG(2^j-1) \xrightarrow{\cong} \RR_{n,2}\cdot \st_n.$$
Here $\BG(k)$ denotes the Brown-Gitler module 
$\BG(k):=\A/\A\langle \chi(\Sq^i) \mid 2i>k\rangle$. Note that $\Sigma^k\Dual \BG(k)$ is isomorphic to the unstable Brown-Gitler module $\J(k)$ ($\Dual$ denoting the Spanier-Whitehead dual) \cite[Proposition 5.4.4]{LZ87}.

We prove the above decomposition of $\RR_{n,2}\cdot \st_n$ by proving the following propositions.

\begin{prop}There is an $\A$-generating set $\{\alpha_0,\alpha_1,\ldots,\alpha_n\}$ with $|\alpha_j|=d- (2^j-1)$ for $\RR_{n,2}\cdot \st_n$. 
\end{prop}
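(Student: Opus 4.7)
The plan is to deduce the proposition from Theorem \ref{main1}(2.a) together with Inoue's description \cite{Ino02} of the $\A$-module indecomposables of the Steinberg summand $\F_2[x_1,\ldots,x_n] \cdot \st_n$ of the polynomial algebra. The key point is that, although $\Sym^*(V^*) \cdot \st_n$ has infinitely many $\A$-generators, only finitely many of Inoue's generating degrees lie in the range $[0,d]$, and these are precisely the degrees $d - (2^j - 1)$ for $0 \le j \le n$.

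First, because the Steenrod action commutes with the $\GL_n$-action, the Steinberg idempotent $\st_n$ produces compatible $\A$-module direct-summand decompositions of $\Sym^*(V^*)$ and of $\RR_{n,2}$, and the isomorphism of Theorem \ref{main1}(2.a) restricts to
$$\Quot^i(\Sym^*(V^*) \cdot \st_n) \cong \Quot^i(\RR_{n,2} \cdot \st_n), \qquad 0 \le i \le d.$$
Second, I would read off from Inoue's catalog that the $\A$-indecomposables of $\Sym^*(V^*) \cdot \st_n$ lying in the range $[0,d]$ occur exactly at the degrees $d - (2^j - 1)$ for $j = 1,\ldots,n$, each one-dimensional. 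The top degree $j = 0$ is supplied separately by Theorem \ref{main1}(1), which identifies $\RR_{n,2}^d \cdot \st_n$ with the one-dimensional space $\St_n \cdot \st_n$; since $\RR_{n,2}^{d+1} = 0$, this contributes the remaining one-dimensional class in $\Quot^d(\RR_{n,2} \cdot \st_n)$. Altogether, $\Quot^{*}(\RR_{n,2} \cdot \st_n)$ has total dimension $n+1$, concentrated in degrees $\{d - (2^j-1) : 0 \le j \le n\}$.

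For each $j$, I would then take $\alpha_j \in \RR_{n,2}^{\,d - (2^j - 1)} \cdot \st_n$ to be any element representing the unique nonzero class of $\Quot^{d - (2^j - 1)}(\RR_{n,2} \cdot \st_n)$; for $j = 0$ a canonical choice is the image of $\gamma \cdot \st_n$ with $\gamma = \L_n^2/(x_1 \cdots x_n)$. Since $\RR_{n,2} \cdot \st_n$ is a bounded-below graded $\A$-module, the graded Nakayama lemma guarantees that any family whose images span $\Quot^{*}(\RR_{n,2} \cdot \st_n)$ is an $\A$-generating set, yielding the desired $\{\alpha_0, \ldots, \alpha_n\}$.

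The principal obstacle is locating, inside Inoue's description, the indecomposables in the precise degrees $d - (2^j - 1)$ for $j = 1,\ldots,n$. If matching Inoue's canonical generators against these degrees is not immediate, an alternative is an independent dimension count: compute the Hilbert--Poincar\'e series $P(\RR_{n,2} \cdot \st_n, t)$ using the Stanley--Reisner formula of Section \ref{SR-ring} combined with the Steinberg character of $\GL_n$, and compare it with $\sum_{j=0}^{n} t^{d - (2^j - 1)} P(\BG(2^j - 1), t)$. Agreement of the two series, together with the lower bound on the number of generators coming from $\Quot^{*}(\RR_{n,2} \cdot \st_n)$, forces the claimed degree pattern and completes the proof.
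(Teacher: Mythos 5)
Your argument is correct, but it takes a heavier route than the paper's. The paper's proof is a direct projection argument: Inoue's classes $\Sq^{2^{i_1}}\cdots \Sq^{2^{i_n}}(1/(x_1\cdots x_n))$, $i_1>\cdots>i_n\ge 0$, minimally generate $\F_2[x_1,\ldots,x_n]\cdot \st_n$ over $\A$; since $\A$-generators of a module map onto $\A$-generators of any quotient, and since $\RR_{n,2}$ vanishes above degree $d$, the generators of degree $>d$ die and the surviving ones are exactly those with $\{2^{i_1},\ldots,2^{i_n}\}=\{2^0,\ldots,2^n\}\setminus\{2^j\}$, which sit in degree $d-(2^j-1)$ for $0\le j\le n$. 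No appeal to Theorem \ref{main1}(2.a) or to a Nakayama argument is needed; indeed the paper remarks explicitly that Proposition \ref{Rmod} is only used for the \emph{minimality} of this generating set, not for its existence. Your route --- transporting $\Quot^i(\Sym^*(V^*)\cdot\st_n)\cong\Quot^i(\RR_{n,2}\cdot\st_n)$ for $i\le d$ via Theorem \ref{main1}(2.a), reading off from \cite{Ino02} that these indecomposables are one-dimensional and concentrated precisely in the degrees $d-(2^j-1)$, and then applying the graded Nakayama lemma --- is valid (distinct exponent sets give distinct degrees, so each relevant $\Quot^i$ is indeed at most one-dimensional), and it buys the stronger conclusion that $\{\alpha_0,\ldots,\alpha_n\}$ is a \emph{minimal} generating set, at the cost of invoking Proposition \ref{Rmod}. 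Two small corrections: the $j=0$ class needs no separate treatment via Theorem \ref{main1}(1), because Inoue's list already contains the degree-$d$ generator with exponent set $\{2^1,\ldots,2^n\}$; and the fallback Hilbert-series comparison with $\bigoplus_{j}\Sigma^{d-(2^j-1)}\BG(2^j-1)$ is not needed here (it is essentially the content of the later lower-bound proposition) and would not by itself pin down the generating degrees.
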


\begin{proof}
	Recall that $\RR_{n,2}$ is a quotient of the polynomial algebra $\F_2[x_1,\ldots,x_n]$ which is trivial in degree greater than $d$. 	
	In \cite{Ino02}, M. Inoue proved that the Steinberg summand $\F_2[x_1,\ldots,x_n]\cdot \st_n$ is minimally $\A$-generated by the classes 
	$$\Sq^{2^{i_1}}\cdots \Sq^{2^{i_n}} (\frac{1}{x_1\ldots x_n})$$
	where $i_1>i_2>\cdots>i_n\ge 0$. 
	In degrees less than or equal to $d$, these corresponds to the sequences $(i_1,\ldots,i_n)=(2^n,\ldots, \widehat{2^j}, \ldots,1)$ with $0\le j\le n$. Projecting these generators to $\RR_{n,2}\cdot \st_n$ gives the desired result.
\end{proof}

\begin{rmk}The above generating set for $\RR_{n,2}$ is \textit{minimal} since we have $\Quot^i(\Sym_n)\cong \Quot^i(\RR_{n,2})$ in the range $0\le i\le d$ (Proposition \ref{Rmod}). However we do not need this fact to produce the surjective map in the following proposition.
\end{rmk}

\begin{prop}
Evaluating on the generators $\alpha_0,\alpha_1,\ldots,\alpha_n$ induces an epimorphism of $\A$-modules:
$$\bigoplus_{j=0}^{n} \Sigma^{|\alpha_j|}\BG(d-|\alpha_j|)=\bigoplus_{j=0}^{n} \Sigma^{d-(2^j-1)}\BG(2^j-1) \twoheadrightarrow \RR_{n,2}\cdot \st_n.$$
\end{prop}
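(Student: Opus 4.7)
The plan is to construct, for each $0\le j\le n$, an $\A$-linear map
$e_j\colon \Sigma^{d-(2^j-1)}\BG(2^j-1)\to \RR_{n,2}\cdot \st_n$
sending the canonical generator (sitting in degree $d-(2^j-1)$) to $\alpha_j$, and then to assemble these into a single map out of the direct sum. Surjectivity of the total map is then automatic from the previous proposition, which asserts that $\{\alpha_0,\ldots,\alpha_n\}$ is an $\A$-generating set of $\RR_{n,2}\cdot \st_n$. Thus the only point needing verification is the well-definedness of each $e_j$.

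By the very definition $\BG(2^j-1)=\A/\A\langle \chi(\Sq^i)\mid 2i>2^j-1\rangle$, the naive evaluation $\theta\mapsto \theta(\alpha_j)$ descends from $\A$ to $\Sigma^{d-(2^j-1)}\BG(2^j-1)$ precisely when $\chi(\Sq^i)(\alpha_j)=0$ for every $i$ with $2i>2^j-1$. Since $|\alpha_j|=d-(2^j-1)$, that inequality is equivalent to $|\alpha_j|+2i>d$. But this is exactly the instability bound furnished by Proposition \ref{emb}(2) in the case $q=k=2$: for every $u\in \RR_{n,2}$ and every $i$ satisfying $|u|+2i>d$, one has $\chi(\P^i)(u)=0$. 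Applying this to $u=\alpha_j$ supplies the required vanishing, so $e_j$ is well-defined, and summing over $j$ yields the desired epimorphism.

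There is, properly speaking, no hard step inside this proposition: the content has been packaged in advance, with Inoue's computation of the minimal generating set of $\F_2[x_1,\ldots,x_n]\cdot \st_n$ providing the $\alpha_j$'s and Proposition \ref{emb}(2) providing the $\chi$-vanishing that matches the Brown-Gitler relations term by term. The genuine obstacle lies \emph{beyond} this proposition, in upgrading the epimorphism to the isomorphism of Theorem \ref{decomposition}: one must prove that the induced surjection is an isomorphism, which amounts to checking that the source and target have the same Hilbert-Poincar\'e series (equivalently, that no further relations are needed on the $\alpha_j$). That dimension count, which will use the computation of $P(\RR_{n,2},t)$ via the $h$-vector of $\Delta(V^*,2)$ together with the known dimensions of the Brown-Gitler modules, is the delicate part of the overall argument.
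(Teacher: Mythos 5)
Your argument is correct and is essentially the paper's own proof: evaluation on the $\alpha_j$ gives a surjection from $\bigoplus_j\Sigma^{|\alpha_j|}\A$ by the preceding proposition, and it factors through the Brown--Gitler quotients because Proposition \ref{emb}(2) (with $q=k=2$) gives $\chi(\Sq^i)(\alpha_j)=0$ whenever $|\alpha_j|+2i>d$, i.e.\ $2i>2^j-1$. Your closing remark about where the real work lies (the dimension count upgrading the surjection to an isomorphism) is also consistent with the structure of the paper, though the paper carries out that count via Mahowald's exact sequences and a linear-independence argument in $\F_2[x_1,\ldots,x_n]/(x_1^{2^n},\ldots,x_n^2)$ rather than via the $h$-vector.
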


\begin{proof}
	Evaluating on the generators $\alpha_0,\alpha_1,\ldots,\alpha_n$ yields an epimorphism of $\A$-modules:
	$$\bigoplus_{j=0}^{n} \Sigma^{|\alpha_j|}\A \twoheadrightarrow \RR_{n,2}\cdot \st_n, \qquad (\Sigma^{|\alpha_j|}\theta_j)_{0\le j\le n}\mapsto \sum_{j=0}^{n}\theta_j(\alpha_j).$$
	This map factors through the canonical projection $\bigoplus_{j=0}^{n} \Sigma^{|\alpha_j|}\A \twoheadrightarrow \bigoplus_{j=0}^{n} \Sigma^{|\alpha_j|}\A/\J_{d-|\alpha_j|}$ since we have 
	$\chi(\Sq^i)(\alpha_j)=0$ in $\RR_{n,2}$ 
	whenever $2i>d-|\alpha_j|$ (Proposition \ref{emb}). 
\end{proof}

%

\begin{prop}
	In each degree, the dimension of $\RR_{n,2}\cdot \st_n$ is greater than or equal to the dimension of $\bigoplus_{j=0}^{n} \Sigma^{d-(2^j-1)}\BG(2^j-1)$.
\end{prop}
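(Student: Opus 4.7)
The plan is to compute the Poincar\'e series of both sides and show they agree, which, combined with the surjection from the previous proposition, forces the map to be an isomorphism and thereby completes the proof of Theorem \ref{decomposition}.

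First I would compute the Poincar\'e series of the left-hand side. Using the presentation $\BG(2^j-1) = \A/\A\langle \chi(\Sq^i) : i\ge 2^{j-1}\rangle$ together with a standard admissible-basis argument (or equivalently the duality $\Sigma^{2^j-1}\Dual\BG(2^j-1)\cong \J(2^j-1)$), one obtains an explicit closed form for $P(\BG(2^j-1),t)$ and hence for $\sum_{j=0}^{n} t^{d-(2^j-1)} P(\BG(2^j-1),t)$.

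Second, for the right-hand side, I would first compute the Hilbert-Poincar\'e series of $\RR_{n,2}$ via Proposition \ref{lsop-dual}, obtaining $h_0+h_1t+\cdots+h_dt^d$ where the $h$-vector is determined by the matroid complex $\Delta(V^*,2)$. To extract the Steinberg summand, I would use the embedding $\RR_{n,2}\hookrightarrow \prod_\ell \RR_{n-1,4}\otimes \RR_{1,2}$ from Proposition \ref{emb}, proceeding by induction on $n$ and applying the branching rule for $\St_n$ under parabolic restriction; alternatively, one could work directly with the $\GL_n(\F_2)$-character of each graded piece and project onto the Steinberg isotypic component via $\st_n$, using the known character formula for $\St_n$ together with the explicit description of the monomials $\iota^*(X_F)$ spanning $\RR_{n,2}$ given in Section \ref{SR-ring}.

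The main obstacle is matching the two Poincar\'e series, which comes down to a combinatorial identity relating the $h$-vector of $\Delta(V^*,2)$ to the Brown-Gitler Poincar\'e series. A possible shortcut would be to construct an explicit $\A$-linear section of the surjection from the previous proposition, for instance by lifting suitable Steenrod operations applied to Inoue's generators $\alpha_j$ through the ideal $I_{n,2}$ and verifying that the resulting splittings are compatible with the Brown-Gitler relations $\chi(\Sq^i)(\alpha_j)=0$ for $2i>d-|\alpha_j|$ already exploited above; such a section would bypass the Poincar\'e series comparison and directly exhibit the decomposition summand by summand.
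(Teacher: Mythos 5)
Your plan does not yet contain the ingredient the statement actually needs: a \emph{lower} bound on $\dim(\RR_{n,2}\cdot\st_n)$ in each degree. The paper gets this by exhibiting explicit elements: the Mitchell--Priddy classes $\Sq^{j_1}\cdots\Sq^{j_n}(1/(x_1\cdots x_n))$ with $2^n\ge j_1\ge 2j_2\ge\cdots\ge 2^{n-1}j_n>0$, which are fixed by $\st_n$ by \cite[Lemma 5.9]{MP83}, and which remain linearly independent in $\RR_{n,2}$ because $I_{n,2}\subseteq(x_1^{2^n},\ldots,x_n^2)$ (Lemma \ref{ideal-rel}), so one can project to $\F_2[x_1,\ldots,x_n]/(x_1^{2^n},\ldots,x_n^2)$ and use the leading-monomial formula of \cite[Lemma 3.6]{MP83}; the count of these classes is then matched with the Brown--Gitler side by using Mahowald's exact sequences to identify the Poincar\'e series of $\bigoplus_j\Sigma^{d-(2^j-1)}\BG(2^j-1)$ with that of $\Sigma^{2^n-1-n}\BG(2^n)$, whose admissible $\chi$-basis is explicit. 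Nothing in your proposal plays this role.

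Concretely, both routes you suggest for the right-hand side fail or are not carried out. The $h$-vector of $\Delta(V^*,2)$ from Proposition \ref{lsop-dual} gives only the total graded dimension of $\RR_{n,2}$, not its Steinberg isotypic part; extracting $\dim(\RR^i_{n,2}\cdot\st_n)$ by ``projecting onto the Steinberg component'' would require knowing the modular ($\F_2$) composition factors, i.e.\ the Brauer character, of every graded piece of $\RR_{n,2}$, which the paper determines only in the top degree and which you do not compute. The embedding $\RR_{n,2}\hookrightarrow\prod_\ell\RR_{n-1,4}\otimes\RR_{1,2}$ of Proposition \ref{emb} is only $\P$-linear --- each factor is stable merely under the stabilizer of the corresponding line, so no factorwise branching rule for $\St_n$ applies --- and in any case an embedding yields \emph{upper} bounds on dimensions, the wrong direction here (the upper bound is already supplied by the surjection of the previous proposition, which is why only the inequality is needed). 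Finally, the ``explicit $\A$-linear section'' shortcut is asserted, not constructed; the paper deliberately avoids building a section and instead concludes the isomorphism by the dimension count above. To repair your argument, replace the character/branching steps by an explicit linearly independent family of $\st_n$-fixed classes in $\RR_{n,2}$ with the correct degree generating function, which is exactly what the paper's proof does.
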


\begin{proof}
Using the exact sequences (known as Mahowald's exact sequences) 
$$0\to \Sigma^{2^{j-1}}\BG(2^{j-1})\to \BG(2^j)\to \BG(2^j-1)\to 0,$$
we see that the Hilbert-Poincar\'e series of $\bigoplus_{j=0}^{n} \Sigma^{d-(2^j-1)}\BG(2^j-1)$ is the same as the Hilbert-Poincar\'e series of $\Sigma^{2^n-1-n}\BG(2^n)$. Since $\BG(2^n)$ has a basis $\{ \chi(\Sq^{i_1}\cdots \Sq^{i_n})\mid 2^{n-1}\ge i_1\ge 2i_2\ge \cdots \ge 2^{n-1}i_n\ge 0 \}$ \cite{BG73}, it follows that 
\begin{eqnarray*}
	P(\bigoplus_{j=0}^{n} \Sigma^{d-(2^j-1)}\BG(2^j-1),t)&=&\sum_{2^{n-1}\ge i_1\ge 2i_2\ge \cdots \ge 2^{n-1}i_n\ge 0} t^{2^n-1-n+i_1+i_2+\cdots +i_n}\\
	&=&\sum_{2^{n-1}\ge i_1\ge 2i_2\ge \cdots \ge 2^{n-1}i_n\ge 0}t^{(2^{n-1}+i_1)+(2^{n-2}+i_2)+\cdots +(1+i_n) -n}\\
	&=&\sum_{2^{n}\ge j_1\ge 2j_2\ge \cdots \ge 2^{n-1}j_n> 0}t^{j_1+j_2+\cdots +j_n -n}.
\end{eqnarray*}
For the Hilbert-Poincar\'e series of $\RR_{n,2}\cdot \st_n$, we prove that the set 
$$\{\Sq^{j_1}\cdots \Sq^{j_n}(\frac{1}{x_1\cdots x_n}) \mid 2^{n}\ge j_1\ge 2j_2\ge \cdots \ge 2^{n-1}j_n> 0 \}$$
is linearly independent in $\RR_{n,2}$. Note that each element in this set is fixed by the Steinberg idempotent $\st_n$ (\cite[Lemma 5.9]{MP83}). By Lemma \ref{ideal-rel}, we see that $I_{n,2}$ is contained in the ideal $(x_1^{2^n},\ldots,x_n^2)$. The linear independence then follows from the natural projection $$\RR_{n,2}\twoheadrightarrow \F_2[x_1,\ldots,x_n]/(x_1^{2^n},\ldots,x_n^2)$$ and the formula (\cite[Lemma 3.6]{MP83})
$$\Sq^{j_1}\cdots \Sq^{j_n}(\frac{1}{x_1\cdots x_n}) =x_1^{j_1-1}\cdots x_n^{j_n-1}+\text{monomials of lower order}$$
(in the lexicographical order starting at the left). The proposition is proved.
\end{proof}

\noindent\textbf{Acknowledgements.} This research is funded by Vietnam National Foundation for Science and Technology Development (NAFOSTED) under grant number 101.04-2019.10.

\def\cprime{$'$}


\begin{thebibliography}{WW18b}
	
	\bibitem[BG73]{BG73}
	Edgar~H. Brown, Jr. and Samuel Gitler.
	\newblock A spectrum whose cohomology is a certain cyclic module over the
	{S}teenrod algebra.
	\newblock {\em Topology}, 12:283--295, 1973.
	
	\bibitem[Bjo84]{Bjorner84}
	Anders Bjorner.
	\newblock Some combinatorial and algebraic properties of {C}oxeter complexes
	and {T}its buildings.
	\newblock {\em Adv. in Math.}, 52(3):173--212, 1984.
	
	\bibitem[Bjo92]{Bjorner92}
	Anders Bjorner.
	\newblock The homology and shellability of matroids and geometric lattices.
	\newblock In {\em Matroid applications}, volume~40 of {\em Encyclopedia Math.
		Appl.}, pages 226--283. Cambridge Univ. Press, Cambridge, 1992.
	
	\bibitem[Dic11]{Dic11}
	Leonard~Eugene Dickson.
	\newblock A fundamental system of invariants of the general modular linear
	group with a solution of the form problem.
	\newblock {\em Trans. Amer. Math. Soc.}, 12(1):75--98, 1911.
	
	\bibitem[DJ91]{DavidJanuskewiz91}
	Michael~W. Davis and Tadeusz Januszkiewicz.
	\newblock Convex polytopes, {C}oxeter orbifolds and torus actions.
	\newblock {\em Duke Math. J.}, 62(2):417--451, 1991.
	
	\bibitem[Ino02]{Ino02}
	Masateru Inoue.
	\newblock {$\mathcal A$}-generators of the cohomology of the {S}teinberg
	summand {$M(n)$}.
	\newblock In {\em Recent progress in homotopy theory ({B}altimore, {MD},
		2000)}, volume 293 of {\em Contemp. Math.}, pages 125--139. Amer. Math. Soc.,
	Providence, RI, 2002.
	
	\bibitem[KM86]{KuhnMitchell86}
	N.~J. Kuhn and S.~A. Mitchell.
	\newblock The multiplicity of the {S}teinberg representation of {${\rm GL}\sb
		n{\bf F}\sb q$} in the symmetric algebra.
	\newblock {\em Proc. Amer. Math. Soc.}, 96(1):1--6, 1986.
	
	\bibitem[Kuh94]{Kuh94}
	Nicholas~J. Kuhn.
	\newblock Generic representations of the finite general linear groups and the
	{S}teenrod algebra. {I}.
	\newblock {\em Amer. J. Math.}, 116(2):327--360, 1994.
	
	\bibitem[Lus74]{Lusztig74}
	George Lusztig.
	\newblock {\em The discrete series of {$GL\sb{n}$} over a finite field}.
	\newblock Princeton University Press, Princeton, N.J.; University of Tokyo
	Press, Tokyo, 1974.
	\newblock Annals of Mathematics Studies, No. 81.
	
	\bibitem[LZ87]{LZ87}
	Jean Lannes and Sa{\"{\i}}d Zarati.
	\newblock Sur les foncteurs d\'eriv\'es de la d\'estabilisation.
	\newblock {\em Math. Z.}, 194(1):25--59, 1987.
	
	\bibitem[Mit85]{Mitchell85}
	Stephen~A. Mitchell.
	\newblock Finite complexes with {$A(n)$}-free cohomology.
	\newblock {\em Topology}, 24(2):227--246, 1985.
	
	\bibitem[MP83]{MP83}
	Stephen~A. Mitchell and Stewart~B. Priddy.
	\newblock Stable splittings derived from the {S}teinberg module.
	\newblock {\em Topology}, 22(3):285--298, 1983.
	
	\bibitem[MS05]{MeyerSmith}
	Dagmar~M. Meyer and Larry Smith.
	\newblock {\em Poincar\'{e} duality algebras, {M}acaulay's dual systems, and
		{S}teenrod operations}, volume 167 of {\em Cambridge Tracts in Mathematics}.
	\newblock Cambridge University Press, Cambridge, 2005.
	
	\bibitem[Mui75]{Mui75}
	Huynh Mui.
	\newblock Modular invariant theory and cohomology algebras of symmetric groups.
	\newblock {\em J. Fac. Sci. Univ. Tokyo Sect. IA Math.}, 22(3):319--369, 1975.
	
	\bibitem[MW02]{MinhWalker}
	Pham~Anh Minh and Grant Walker.
	\newblock Linking first occurrence polynomials over {$\Bbb F_p$} by {S}teenrod
	operations.
	\newblock {\em Algebr. Geom. Topol.}, 2:563--590, 2002.
	
	\bibitem[Oxl11]{Oxley}
	James Oxley.
	\newblock {\em Matroid theory}, volume~21 of {\em Oxford Graduate Texts in
		Mathematics}.
	\newblock Oxford University Press, Oxford, second edition, 2011.
	
	\bibitem[Qui78]{Quillen-FiberLemma}
	Daniel Quillen.
	\newblock Homotopy properties of the poset of nontrivial {$p$}-subgroups of a
	group.
	\newblock {\em Adv. in Math.}, 28(2):101--128, 1978.
	
	\bibitem[Rei76]{Reisner}
	Gerald~Allen Reisner.
	\newblock Cohen-{M}acaulay quotients of polynomial rings.
	\newblock {\em Advances in Math.}, 21(1):30--49, 1976.
	
	\bibitem[Smi07]{LarrySmith}
	Larry Smith.
	\newblock An algebraic introduction to the {S}teenrod algebra.
	\newblock In {\em Proceedings of the {S}chool and {C}onference in {A}lgebraic
		{T}opology}, volume~11 of {\em Geom. Topol. Monogr.}, pages 327--348. Geom.
	Topol. Publ., Coventry, 2007.
	
	\bibitem[Sta96]{Stanley-book1996}
	Richard~P. Stanley.
	\newblock {\em Combinatorics and commutative algebra}, volume~41 of {\em
		Progress in Mathematics}.
	\newblock Birkh\"{a}user Boston, Inc., Boston, MA, second edition, 1996.
	
	\bibitem[Ste56]{Ste56}
	Robert Steinberg.
	\newblock Prime power representations of finite linear groups.
	\newblock {\em Canad. J. Math.}, 8:580--591, 1956.
	
	\bibitem[Wac07]{Wachs}
	Michelle~L. Wachs.
	\newblock Poset topology: tools and applications.
	\newblock In {\em Geometric combinatorics}, volume~13 of {\em IAS/Park City
		Math. Ser.}, pages 497--615. Amer. Math. Soc., Providence, RI, 2007.
	
	\bibitem[Woo89]{Wood89}
	R.~M.~W. Wood.
	\newblock Steenrod squares of polynomials and the {P}eterson conjecture.
	\newblock {\em Math. Proc. Cambridge Philos. Soc.}, 105(2):307--309, 1989.
	
	\bibitem[WW07]{WalkerWood2007}
	G.~Walker and R.~M.~W. Wood.
	\newblock Young tableaux and the {S}teenrod algebra.
	\newblock In {\em Proceedings of the {S}chool and {C}onference in {A}lgebraic
		{T}opology}, volume~11 of {\em Geom. Topol. Monogr.}, pages 379--397. Geom.
	Topol. Publ., Coventry, 2007.
	
	\bibitem[WW18a]{WalkerWoodBook1}
	Grant Walker and Reginald M.~W. Wood.
	\newblock {\em Polynomials and the {${\rm mod}\, 2$} {S}teenrod algebra. {V}ol.
		1. {T}he {P}eterson hit problem}, volume 441 of {\em London Mathematical
		Society Lecture Note Series}.
	\newblock Cambridge University Press, Cambridge, 2018.
	
	\bibitem[WW18b]{WalkerWoodBook2}
	Grant Walker and Reginald M.~W. Wood.
	\newblock {\em Polynomials and the {${\rm mod}\, 2$} {S}teenrod algebra. {V}ol.
		2. {R}epresentations of {${\rm GL}(n,\Bbb F_2)$}}, volume 442 of {\em London
		Mathematical Society Lecture Note Series}.
	\newblock Cambridge University Press, Cambridge, 2018.
	
\end{thebibliography}



\end{document}